\numberwithin{equation}{section} 
\theoremstyle{plain}
\newtheorem{thm}{Theorem}[section]
\newtheorem{lem}[thm]{Lemma}
\newtheorem{fact}[thm]{Fact}
\newtheorem*{stra}{Strategy}
\newcommand{\cal}{\mathscr}
\def\g{\mathfrak{g}}
\def\v{\mathfrak{v}}
\def\z{\mathfrak{z}}
\newcommand\C{{\mathbb{C}}}
\newcommand\N{{\mathbb{N}}}
\newcommand\R{{\mathbb{R}}}
\newcommand\HH{{\mathbb{H}}}
\def\cotg{\operatorname{cotg}}
\def\C{{\mathbb C}}
\def\N{{\mathbb N}}
\def\0{{\bar 0}}
\def\1{{\bar 1}}
\def\ad{{\operatorname{ad}}}
\newcommand{\sh}{\operatorname{sinh}}
\def\phi{{\varphi}}
\renewcommand{\mathcal}{\mathscr}
\newcommand{\bigsearrow}{\rotatebox[origin=b]{-45}{$\xrightarrow{\kern10mm}$}}
\newcommand{\bigswarrow}{\rotatebox[origin=b]{45}{$\xleftarrow{\kern10mm}$}}
\newcommand{\bignearrow}{\rotatebox[origin=b]{-315}{$\xrightarrow{\kern10mm}$}}
\begin{document}
\title{Uniqueness  of solutions to the Schr{\"o}dinger equation\\
 on the Heisenberg group}
\author{Salem Ben Sa{\" \i}d}
\address{S. Ben Sa{\"\i}d:  Universit\'e Henri Poincar\'e-Nancy 1, 
Institut de Math\'ematiques Elie Cartan, 
B.P. 239, 54506 Vandoeuvre-Les-Nancy, Cedex, France} 
\email{Salem.BenSaid@iecn.u-nancy.fr}
\author{Sundaram Thangavelu}
\address{S. Thangavelu: Department of Mathematics, Indian Institute of 
Science, Bangalore 560 012, India}
\email{veluma@math.iisc.ernet.in}
\begin{abstract} 
This paper deals with the Schr{\"o}dinger equation $i\partial_s u({\bf z},t;s)-\cal L u({\bf z}, t;s)=0,$ where $\cal L$ is the sub-Laplacian on the Heisenberg group. Assume that the initial data $f$ satisfies $| f({\bf z},t)| \leq C q_a({\bf z},t),$ where $q_s$ is the heat kernel associated to $\cal L.$  If in addition $ |u({\bf z},t;s_0)|\leq C q_b({\bf z},t),$ for some $s_0\in \R^*,$ then we prove that $u({\bf z},t;s)=0$ for all $s\in \R $ whenever $ab<s_0^2.$  This result also holds  true on $H$-type groups. 
\end{abstract}
\maketitle
%

\section{Introduction}
Let $\HH^n$ be the $(2n+1)$-dimensional Heisenberg group, and denote by $\cal L$ the sub-Laplacian for 
$\HH^n.$ In this paper we consider the following initial value problem for the Schr{\"o}dinger equation for 
$\cal L:$
\begin{align*}
&i\partial_s u({\bf z},t;s)-\cal L u({\bf z}, t;s)=0,\qquad ({\bf z},t)\in \HH^n,\; s\in \R,\\
&u({\bf z},t;0)=f({\bf z},t)
\end{align*}
where $ f $ is assumed to be in $ L^2(\HH^n).$ Our goal is to find sufficient conditions on the behavior of the solution $u$ at two different times $0$ and $s_0$ which guarantee that $u\equiv 0$   is the unique solution to the above initial data problem.  
More precisely, under some conditions, we prove that  if the function $f$ has sufficient decay and if in addition the solution 
$u({\bf z},t; s_0)$ has sufficient decay at a fixed $s_0\in \R\setminus\{0\},$ then the solution must be trivial. 

Uniqueness theorems of this kind were first proved by Chanillo \cite{Ch} 
where he considered the Schr{\"o}dinger equation associated to the standard 
Laplacian on $ \R^n.$ Using Hardy's theorem for the Euclidean Fourier 
transform he proved a uniqueness theorem for solutions of the 
Schr{\"o}dinger equation. Until then Hardy's theorem was considered only in 
the context of heat equation and Chanillo's work triggered a lot of attention 
on the Schr{\"o}dinger equation. Chanillo himself treated the 
Schr{\"o}dinger equation on complex Lie groups where the initial condition 
was assumed to be $ K$-biinvariant. However, if we use Radon transform the problem can be reduced to the Euclidean case and his result holds without any 
restriction either on the group or on the initial condition.

Similar uniqueness results for other  Schr{\"o}dinger equations and for the Korteweg-de Vries equation have received a good deal of attention in recent 
years (see for instance  \cite{E1, E2, IK, K, R, Z}). These authors have 
developed 
powerful PDE techniques to deal with uniqueness results. Completing a full circle, in a recent work Cowling et al. \cite{C} have used a uniqueness theorem 
for the Schr{\"o}dinger equation to give a `real variable proof' of Hardy's 
theorem. See also the works \cite{E3, E4}.

In this article we prove a uniqueness theorem for the Schr{\"o}dinger equation on the Heisenberg group which is similar to what Chanillo has 
proved in the Euclidean case. Our approach  uses Hardy's theorem for the 
Hankel transform obtained in 
\cite{Tu}, which says that a function and its Hankel transform both cannot 
have arbitrary Gaussian decay at infinity unless, of course, the function is 
identically zero. It is interesting to note that we do not need to use 
Hardy's theorem for the Heisenberg group proved in \cite{Th}.

In the last section we extend our main result to a class of groups that generalizes the Heisenberg group, namely 
$H$-type groups. This class was introduced in \cite{Ka}. The list of  $H$-type groups includes the Heisenberg groups and their analogues built up with quaternions or octonions in place of complex numbers, as well as many other groups. 

\section{Background} 
The $(2n+1)$-dimensional Heisenberg group, denoted by $\HH^n,$ is $\C^n\times \R$ equipped with the group law
$$({\bf z},t)({\bf w},s)=({\bf z}+{\bf w}, t+s+{1\over 2} {\rm Im}({\bf z}\cdot \bar {\bf w})).$$ Under this multiplication $\HH^n$ becomes a nilpotent unimodular Lie group, the Haar measure being the Lebesgue measure $d{\bf z}dt$ on $\C^n\times \R.$ The corresponding Lie algebra   is generated by the vector fields
\begin{align*}
&X_j:={{\partial}\over {\partial x_j}}+{1\over 2} y_j{{\partial}\over {\partial t}},\qquad j=1,2,\ldots,n,\\
&Y_j:={{\partial}\over {\partial y_j}}-{1\over 2} x_j{{\partial}\over {\partial t}},\qquad j=1,2,\ldots,n,
\end{align*}
and $\displaystyle{T:={{\partial}\over {\partial t}}}.$ The sub-Laplacian 
$$\cal L:=-\sum_{j=1}^n X_j^2+Y_j^2$$ can be written as 
$$\cal L=-\Delta_{\R^{2n}}-{1\over 4} |{\bf z}\vert ^2  \partial_t^2+N \partial_t,$$ where $$N=\sum_{j=1}^n x_j {{\partial}\over {\partial y_j}}-y_j {{\partial}\over {\partial x_j}}.$$ This second order differential operator $\cal L$ is hypoelliptic, self-adjoint and nonnegative. It generates a semigroup with kernel $q_s({\bf z}, t),$ called the heat kernel. In particular, $q_s({\bf z}, t)$ is nonnegative and has the property 
$$q_{r^2 s}({\bf z}, t) = r^{-2(n+1)}  q_s(r^{-1}{\bf z}, r^{-2}t),\qquad r\not = 0.$$
Moreover,
$$\int_{\R} e^{i\lambda t} q_s({\bf z},t) dt=(4\pi)^{-n} \left( {{\lambda}\over {\sh \lambda s}}\right)^n e^{-{1\over 4} \lambda (\coth s\lambda ) | {\bf z}| ^2}$$ 
(see \cite{Th}). Henceforth, for $f\in L^1(\HH^n)$ and $\lambda\in \R,$ we will write $$f^\lambda({\bf z}):=\int_{\R} e^{i\lambda t} f({\bf z}, t) dt. $$

We now collect some properties of the heat kernel $ q_s({\bf z},t).$

\begin{fact}\label{fact1} The heat kernel satisfies the  semigroup 
property $q_a\ast q_b({\bf z},t)$ =$q_{a+b}({\bf z},t).$
\end{fact}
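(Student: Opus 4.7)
The plan is to derive the identity from the abstract semigroup property of the heat operators $e^{-s\cal L}$. Since $\cal L$ is self-adjoint and nonnegative on $L^2(\HH^n)$, the spectral theorem produces a strongly continuous contraction semigroup $\{e^{-s\cal L}\}_{s\geq 0}$ satisfying $e^{-(a+b)\cal L} = e^{-a\cal L}\circ e^{-b\cal L}$. Because $\cal L$ is left-invariant on $\HH^n$, each $e^{-s\cal L}$ commutes with left translations and is therefore realized as right convolution with its convolution kernel, which is by definition the heat kernel $q_s$: that is, $e^{-s\cal L}f = f\ast q_s$ for every $f\in L^2(\HH^n)$.

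Combining these two observations with associativity of convolution on $\HH^n$ gives, for every $f\in L^2(\HH^n)$,
$$f\ast q_{a+b} \;=\; e^{-(a+b)\cal L}f \;=\; e^{-a\cal L}(f\ast q_b) \;=\; (f\ast q_b)\ast q_a \;=\; f\ast (q_b\ast q_a),$$
and, testing this against a dense family of $f$'s, one concludes $q_{a+b} = q_b\ast q_a$. Reversing the roles of $a$ and $b$ in $e^{-(a+b)\cal L} = e^{-b\cal L}\circ e^{-a\cal L}$ gives $q_{a+b} = q_a\ast q_b$ as well, in accordance with the fact that $q_s$ is a radial (i.e.\ $U(n)$-invariant) function on $\HH^n$ and that two such functions convolve commutatively.

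The only delicate point is the identification of $e^{-s\cal L}$ as right convolution with $q_s$, which is the standard consequence of left-invariance of $\cal L$ together with Hunt's theorem on convolution semigroups generated by hypoelliptic sub-Laplacians, so no real obstacle arises. A parallel direct route would pass through the partial Fourier transform in the central variable $t$: $\HH^n$-convolution becomes the $\lambda$-twisted convolution on $\C^n$, and the explicit Mehler-type formula recalled just above exhibits $q_s^\lambda$ as a constant multiple of the kernel of a scaled Hermite semigroup $e^{-sH(\lambda)}$; the semigroup property of this Hermite-type operator then yields the identity fiberwise in $\lambda$, and hence globally after inverse Fourier transform in $t$.
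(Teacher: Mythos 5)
Your proof is correct, and it is essentially the argument the paper leaves implicit: Fact~2.1 is stated without proof as standard background (the paper defines $q_s$ precisely as the convolution kernel of the semigroup generated by $\cal L$, following \cite{Th}), so the chain $f\ast q_{a+b}=e^{-(a+b)\cal L}f=e^{-a\cal L}\bigl(e^{-b\cal L}f\bigr)=f\ast(q_b\ast q_a)$ together with an approximate-identity argument is exactly the intended justification, and you handle the one genuinely delicate point on a noncommutative group --- the order of convolution, since a priori one only gets $q_b\ast q_a=q_{a+b}$ from right-convolution realizations --- correctly, both by swapping the roles of $a$ and $b$ and by the remark that $\bf z$-radial functions on $\HH^n$ convolve commutatively. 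Your alternative fiberwise route, verifying $q_a^\lambda\ast_\lambda q_b^\lambda=q_{a+b}^\lambda$ from the explicit formula for $q_s^\lambda$ and then inverting the partial Fourier transform in $t$, is also sound and matches the paper's computational framework.
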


The following is a slight modification of \cite[Proposition 2.8.2]{Th}.
\begin{fact}\label{fact2} The heat kernel $q_s({\bf z},t)$ satisfies the following estimate 
\begin{equation}\label{hk}
q_s({\bf z},t) \leq C s^{-n-1}e^{-{{\pi}\over 2} {{|t|}\over s}} e^{-{1\over 4}{{|{\bf z}|^2}\over s}},\qquad s>0.
\end{equation} 
\end{fact}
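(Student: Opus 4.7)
The plan is to combine the explicit Fourier representation from the background with a contour deformation in the complex $\lambda$-plane. Inverting the partial Fourier transform in $t$ and substituting $\mu=\lambda s$ yields
$$q_s({\bf z},t)=\frac{(4\pi)^{-n}}{2\pi s^{n+1}}\int_\R e^{-i\mu t/s}\left(\frac{\mu}{\sinh\mu}\right)^n e^{-\frac{|{\bf z}|^2}{4s}\mu\coth\mu}\,d\mu,$$
which already exhibits the factor $s^{-(n+1)}$ and reduces the problem to estimating a dimensionless integral.

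The Gaussian decay in ${\bf z}$ follows from the elementary inequality $\mu\coth\mu\ge 1$ on the real axis (attained at $\mu=0$), since then $e^{-|{\bf z}|^2\mu\coth\mu/(4s)}\le e^{-|{\bf z}|^2/(4s)}$ and the leftover factor $(\mu/\sinh\mu)^n$ is absolutely integrable. The decay in $t$ is extracted by shifting the contour onto the horizontal line $\{\mu-i(\pi/2)\sgn(t):\mu\in\R\}$. The integrand is holomorphic in the strip $|\operatorname{Im}\mu|<\pi$ (the nearest poles of $1/\sinh\mu$ lying at $\mu=\pm i\pi$), so the shift is legitimate; it produces $|e^{-i\mu t/s}|=e^{-\pi|t|/(2s)}$, and the identities $\sinh(\mu-i\pi/2)=-i\cosh\mu$ and $\coth(\mu-i\pi/2)=\tanh\mu$ keep the shifted integrand absolutely integrable over $\R$.

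The combined bound is obtained by performing the shift and observing that $\Re[(\mu-i\pi/2)\tanh\mu]=\mu\tanh\mu\ge 0$, so the ${\bf z}$-exponential survives in non-increasing form while the prefactor $e^{-\pi|t|/(2s)}$ is already in hand. The main technical obstacle is recovering the sharp coefficient $1/4$ in the ${\bf z}$-exponent on the shifted contour, because $\mu\tanh\mu$ vanishes at $\mu=0$ and does not dominate $1$ uniformly there. I would handle this by splitting into cases according to whether $|{\bf z}|^2/s$ is small or large, using the real-axis bound in the first regime and the shifted-contour bound in the second, or by combining the two one-sided estimates via the elementary inequality $\min(A,B)\le A^{\alpha}B^{1-\alpha}$ for a suitable $\alpha\in[0,1]$, absorbing any constant loss into $C$.
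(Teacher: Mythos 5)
Your reduction to $s=1$ via the substitution $\mu=\lambda s$ is exactly the paper's scaling step (the paper uses the equivalent identity $q_s({\bf z},t)=s^{-n-1}q_1(s^{-1/2}{\bf z},s^{-1}t)$), and both of your one-sided estimates are correct: on the real axis $\mu\coth\mu\ge 1$ gives $q_s\le Cs^{-n-1}e^{-|{\bf z}|^2/(4s)}$, and the shift to $\operatorname{Im}\mu=-\frac{\pi}{2}\sgn(t)$ is legitimate in the strip $|\operatorname{Im}\mu|<\pi$ and gives $q_s\le Cs^{-n-1}e^{-\pi|t|/(2s)}$ (your identities $\sinh(\mu-i\pi/2)=-i\cosh\mu$ and $\coth(\mu-i\pi/2)=\tanh\mu$ check out). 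The genuine gap is the combination step, and neither of your proposed repairs can close it, for a structural reason: the two one-sided bounds taken together are strictly weaker than the claimed product. The function $g:=\min\bigl(Cs^{-n-1}e^{-|{\bf z}|^2/(4s)},\,Cs^{-n-1}e^{-\pi|t|/(2s)}\bigr)$ satisfies both of your estimates yet exceeds the right-hand side of \eqref{hk} by a factor $e^{M}$ when $|{\bf z}|^2/(4s)=\pi|t|/(2s)=M\to\infty$, so no pointwise manipulation of the two bounds can yield \eqref{hk}. Concretely, your interpolation $\min(A,B)\le A^{\alpha}B^{1-\alpha}$ produces $e^{-\alpha|{\bf z}|^2/(4s)}e^{-(1-\alpha)\pi|t|/(2s)}$, degrading \emph{both} exponents for every $\alpha\in(0,1)$; and your case split on $|{\bf z}|^2/s$ fails in the regime where $|{\bf z}|^2/s$ and $|t|/s$ are simultaneously large, since there the real-axis bound carries no $t$-decay and $e^{-\pi|t|/(2s)}$ has no positive lower bound. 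Nor can the shifted contour itself supply the missing Gaussian factor: near $\nu=0$ one has $e^{-\frac{|{\bf z}|^2}{4s}\nu\tanh\nu}\approx e^{-\frac{|{\bf z}|^2}{4s}\nu^2}$, whose $\nu$-integral decays only like $\sqrt{s}/|{\bf z}|$, and no intermediate line $\operatorname{Im}\mu=-\theta$ works either, because the $t$-factor requires $\theta\ge \pi/2$ while $\Re\bigl[(\nu-i\theta)\coth(\nu-i\theta)\bigr]$ at $\nu=0$ equals $\theta\cot\theta\le 0$ for such $\theta$.

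The loss of constants is not cosmetic here: the proof of Theorem \ref{main} uses the exact coefficient $\frac14$ from \eqref{hk} in the bound $|f_\epsilon^\lambda({\bf z})|\le c\,e^{-\frac14|{\bf z}|^2/(a+\epsilon)}$ (and its analogue for $u_\epsilon$), which feeds into the Hardy condition; an $\alpha$-degraded Gaussian constant would shrink the admissible range $ab<s_0^2$ in the main theorem. The paper sidesteps the whole difficulty: it does not reprove the $s=1$ estimate but quotes $q_1({\bf z},t)\le Ce^{-\frac{\pi}{2}|t|}e^{-\frac14|{\bf z}|^2}$ from \cite[(2.8.9)--(2.8.10)]{Th} and then scales, so the delicate joint analysis in $({\bf z},t)$ lives entirely in that citation. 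To make your argument self-contained you would need to extract joint decay from a single representation --- for instance a $({\bf z},t)$-dependent (steepest-descent type) contour --- rather than merging the two straight-line estimates after the fact.
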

Indeed, for $s=1$ by \cite[(2.8.9-2.8.10)]{Th}, we have 
$$
q_1({\bf z},t)\leq C e^{-{{\pi}\over 2} |t|} e^{-{1\over 4}|{\bf z}|^2}.
$$
 Now Fact \ref{fact2} follows from the fact that 
$q_s({\bf z},t)=s^{-n-1} q_1(s^{-1/2} {\bf z}, s^{-1} t)$ for all $s>0. $

Let $f$ and $g$ be two functions on $\HH^n.$ The convolution of $f$ with $g$ is defined by 
$$(f\ast g)({\bf z}, t) =\int_{\HH^n} f(({\bf z}, t)(-{\bf w}, s) )g({\bf w}, s) d{\bf w}ds.$$ An easy calculation shows that 
$$(f\ast g)^\lambda({\bf z})=\int_{\C^n} f^\lambda({\bf z}-{\bf w}) g^\lambda({\bf w}) e^{i{\lambda\over 2}{\rm Im}({\bf z}\cdot \bar{\bf w})} d{\bf w}.$$ The right hand side is called the $\lambda$-twisted convolution of $f^\lambda$ with $g^\lambda$ denoted by $f^\lambda\ast_\lambda g^\lambda. $

Let $\cal P$ be the set of all polynomials of the form $ P({\bf z})=\sum_{|  \alpha |+| \beta| \leq m} a_{ \alpha, \beta} {\bf z}^{\alpha} \bar {\bf z}^{\beta}.$ For each pair of nonnegative integers $(p,q),$ we define $\cal P_{p,q}$ to be the subspace of $\cal P$ consisting of all polynomials of the form $P({\bf z})=\sum_{|  \alpha |=p } \sum_{|  \beta |=q }  a_{ \alpha, \beta} {\bf z}^{\alpha} \bar {\bf z}^{\beta}.$ 

Let  $\cal H_{p,q}:=\{ P\in \cal P_{p,q}\;|\; \Delta P=0\},$ where $\Delta$ denotes the Laplacian on $\C^n.$ The elements of $\cal H_{p,q}$ are called bigraded solid harmonics of degree $(p,q).$ We will denote by  $\cal S_{p,q}$ the space of all restrictions of bigraded solid harmonics of degree $(p,q) $ to the sphere $S^{2n-1} . $   By \cite{Th}, the space $L^2(S^{2n-1})$ is the orthogonal direct sum of the spaces $\cal S_{p,q},$ with $p,q\geq 0.$ We choose an orthonormal basis   $\{Y_{p,q}^j\;|\; 1\leq j\leq d(p,q)\}$   for $\cal S_{p,q}. $ Then by standard arguments it follows that every continuous function $f$ on $\C^n$ can  be expanded as 
$$f( r\omega) =\sum_{p,q\geq 0}\sum_{j=1}^{d(p,q)} f_{p,q,j}(r) 
{Y_{p,q}^j}(\omega), \qquad r>0,\; \omega \in S^{2n-1},$$ where 
\begin{equation}\label{sc}
f_{p,q,j}(r) :=\int_{S^{2n-1}} f(r\omega) \overline{Y_{p,q}^j(\omega)} d\sigma(\omega).
\end{equation}

For $k\in \N,$ we write $L_k^{n-1}$ for the Laguerre polynomial defined by 
$$L_k^{n-1}(t)=\sum_{j=0}^{k} {{(-1)^j \Gamma(n+k)}\over{(k-j)! \Gamma(n+j)}}t^j.$$ 
 For $\lambda\in \R^*,$ define the Laguerre functions  $\varphi_{k,\lambda}^{n-1}$ by
\begin{equation}\label{hf}
\varphi_{k,\lambda}^{n-1}({\bf z}) =L_k^{n-1} \left({{|\lambda|}\over 2}\vert {\bf z}\vert^2\right) e^{-{{|\lambda|}\over 4} \vert {\bf z}\vert^2},
\end{equation}
 for ${\bf z}\in \C^n.$ Suppose that $f$ if a radial function in $L^1(\HH^n).$ Then $f(r)$ is in $L^1(\R^+, r^{2n-1} dr),$ where $f(r)$ stands for $f({\bf w})$ with $|{\bf w}|=r. $ For the following Hecke-Bochner formula we refer to   \cite[Theorem 2.6.1]{Th}. 
\begin{thm} \label{HB} Let $f({\bf z})= P({\bf z}) g(|{\bf z}|),$ where $P\in \cal H_{p,q}$ and $g\in L^1(\R^+, r^{2n-1}dr).$ Then for $\lambda\in \R^*,$ we have
$$
 f\ast_\lambda \varphi_{k,\lambda}^{n-1}({\bf z})= (2\pi)^{-n} |\lambda|^{p+q}  P({\bf z}) g\ast_\lambda \varphi_{{k-p}, \lambda}^{n+p+q-1}({\bf z}), $$where  the convolution on the right hand side is taken on $ \C^{n+p+q} $ 
treating the radial functions $ g $ and $ \varphi_{{k-p}, \lambda}^{n+p+q-1}$ 
as functions on $ \C^{n+p+q} .$ More explicitly we have 
\begin{eqnarray}\label{hb}
 g\ast_\lambda \varphi_{{k-p}, \lambda}^{n+p+q-1}({\bf z}) &=&
 {{ (2\pi)^{n+p+q} |\lambda|^{{n+p+q}\over 2} 
{ {2^{-(n+p+q)+1} \Gamma(k-p+1)}}\over {\Gamma(k+n+q)}}  }\\
&& \Big( \int_0^\infty g(s) L_{k-p}^{n+p+q-1}\Big( {{|\lambda|}\over 2} s^2\Big)  e^{-{{|\lambda|}\over 4} s^2} s^{2(n+p+q)-1}ds\Big) L_{k-p}^{n+p+q-1}\left( {{|\lambda|}\over 2} |{\bf z}|^2\right)  
e^{-{{|\lambda|}\over 4} |{\bf z}|^2}  .\nonumber
\end{eqnarray}
\end{thm}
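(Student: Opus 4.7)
The plan is to prove the Hecke-Bochner formula in three stages: first reduce to $|\lambda|=1$ by scaling; second, use $U(n)$-equivariance to force the output to have the form $P({\bf z})h(|{\bf z}|)$; third, carry out a dimension-raising argument that identifies the radial factor $h$ with a twisted convolution in $\mathbb{C}^{n+p+q}$.

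First I would reduce to $\lambda = 1$. Under the dilation $\delta_r f({\bf z}) = f(r{\bf z})$, a change of variable shows $(\delta_r f) \ast_\lambda (\delta_r h)({\bf z}) = r^{-2n}(f \ast_{r^{-2}\lambda} h)(r{\bf z})$, while $\varphi_{k,\lambda}^{n-1}({\bf z}) = \varphi_{k,1}^{n-1}(|\lambda|^{1/2}{\bf z})$. Taking $r = |\lambda|^{1/2}$ reduces the identity to $\lambda = \pm 1$, and complex conjugation handles $\lambda < 0$. Next I would invoke $U(n)$-symmetry: the Laguerre function $\varphi_{k,1}^{n-1}$ is $U(n)$-invariant, while by definition $\mathcal{H}_{p,q}$ sits inside a specific $U(n)$-isotypic component of $\mathcal{P}$. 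The change of variable ${\bf w} \mapsto U{\bf w}$ in the defining integral for $f \ast_1 \varphi_{k,1}^{n-1}$ then forces the output to have the form $P({\bf z}) h(|{\bf z}|)$ for some radial $h$, reducing the problem to identifying $h$.

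The crucial third step is to show that, up to the normalizing constant in \eqref{hb}, this $h$ equals the radial twisted convolution $g \ast_\lambda \varphi_{k-p,\lambda}^{n+p+q-1}$ computed in $\mathbb{C}^{n+p+q}$. The approach is to expand $\varphi_{k,1}^{n-1}({\bf z}-{\bf w})\, e^{(i/2)\mathrm{Im}({\bf z}\cdot\bar{\bf w})}$ as a bilinear sum of special Hermite functions $\Phi_{\alpha,\beta}({\bf z})\overline{\Phi_{\alpha,\beta}({\bf w})}$ with $|\alpha| = k$ and $|\beta|$ arbitrary, pair against $P({\bf z}) g(|{\bf z}|)$, and use the orthogonality of the bigraded monomials $\{{\bf z}^\alpha \bar{{\bf z}}^\beta\}$ over $S^{2n-1}$ to extract the $\mathcal{H}_{p,q}$-component. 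This naturally factors out $P({\bf z})$ and leaves a radial integral that, after changing variables, coincides with the higher-dimensional twisted convolution. The concrete formula \eqref{hb} then follows because the twisted convolution of a radial function $g$ with $\varphi_{k-p,\lambda}^{n+p+q-1}$ in $\mathbb{C}^{n+p+q}$ is always a scalar multiple of $\varphi_{k-p,\lambda}^{n+p+q-1}$ --- the scalar being the Laguerre coefficient displayed on the right-hand side, with prefactor determined by the $L^2$-norm of $\varphi_{k-p,\lambda}^{n+p+q-1}$ on $\mathbb{C}^{n+p+q}$.

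The main obstacle is the dimension-raising step. One must carefully track the \emph{asymmetric} shifts: the upper Laguerre index grows by the full total degree $p+q$, while the lower index drops only by $p$. This asymmetry reflects the noncommutativity of $\ast_\lambda$ and the different roles of holomorphic versus antiholomorphic monomials under the oscillator representation; bookkeeping errors here are the dominant source of sign and index mistakes in this kind of computation. A secondary technical point is justifying the interchange of integration and the special-Hermite expansion for merely $L^1$ data, which one handles by first treating Schwartz $g$ and then approximating, using the Gaussian decay of $\varphi_{k,1}^{n-1}$ against the polynomial growth of $L_k^{n-1}$.
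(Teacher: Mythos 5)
You should first note a structural point: the paper does not prove Theorem~\ref{HB} at all --- it is imported verbatim from \cite[Theorem 2.6.1]{Th} (Geller's Hecke--Bochner formula for twisted convolution), so there is no in-paper argument to compare against. Judged on its own terms, your first two stages are sound and match the standard route of the cited source: the scaling identity $(\delta_r f)\ast_\lambda(\delta_r h)({\bf z})=r^{-2n}(f\ast_{r^{-2}\lambda}h)(r{\bf z})$ is correct, and the $U(n)$-equivariance argument (twisted convolution by a radial function commutes with rotations; the $\mathcal S_{p,q}$ are mutually inequivalent irreducibles, so Schur's lemma on the multiplicity spaces forces the output to be $P({\bf z})h(|{\bf z}|)$) is also correct. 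Likewise your final observation, that the twisted convolution of a radial $g$ with $\varphi_{k-p,\lambda}^{n+p+q-1}$ on $\C^{n+p+q}$ is a scalar multiple of $\varphi_{k-p,\lambda}^{n+p+q-1}$ with the scalar given by the Laguerre coefficient, is standard and yields \eqref{hb} once the identification is made.

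The genuine gap is that your third stage, which \emph{is} the theorem, is described rather than performed. After expanding the kernel in special Hermite functions, computing the pairing $\langle Pg,\Phi_{\alpha\beta}\rangle$ requires the bigraded spherical-harmonic decomposition of the $\Phi_{\alpha\beta}$ themselves, and that is exactly where the dimension jump $n\to n+p+q$ and the shift $k\to k-p$ originate; in the literature this is extracted via the Weyl/Bargmann transform or explicit Laguerre--Bessel identities (essentially Hecke--Bochner for the Fourier transform on spheres), not by a formal rearrangement --- asserting that orthogonality ``naturally factors out $P({\bf z})$'' begs the question. Moreover, the orthogonality you invoke is false as stated: the bigraded monomials ${\bf z}^\alpha\bar{\bf z}^\beta$ are \emph{not} orthogonal over $S^{2n-1}$ when $\alpha-\beta=\alpha'-\beta'$ (e.g.\ $|z_1|^2$ and $|z_2|^2$ on $S^3$ have positive inner product); only the harmonics $Y_{p,q}^j$ are orthogonal, and passing from monomials to harmonics is part of the work. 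Finally, your reduction of $\lambda<0$ to $\lambda>0$ by conjugation sends $P\in\mathcal H_{p,q}$ to $\bar P\in\mathcal H_{q,p}$ and hence produces the index shift $k-q$, not $k-p$; so as written the reduction does not reproduce the claimed formula uniformly in the sign of $\lambda$ without further argument --- an instance of precisely the asymmetric $(p,q)$-bookkeeping you flagged as the main obstacle but did not resolve.
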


To end this section, let us recall Hardy's uncertainty principle for the 
Hankel transform. For $\alpha>-{1\over 2}$ and $F\in S(\R^+),$ the Hankel transform of order $ \alpha$ is defined by 
\begin{equation}\label{hankel}
\cal H_\alpha  F(s)=\int_0^\infty  F(r)   {{J_\alpha( rs)}\over {(rs)^\alpha}} r^{2\alpha+1} dr,
\end{equation}
 where $ J_\alpha(w) $ is the Bessel function of order $ \alpha $ defined by
$$  J_\alpha(w)=\Big({w\over 2}\Big)^{\alpha}  \sum_{k=0}^\infty {{(-1)^k\left({w \over 2}\right)^{2k}}
 \over { k ! \Gamma(\alpha+k+1)}}.$$ 
 \begin{thm}\label{hardy} (Hardy's theorem, \cite{Tu}) Let $F$  be a measurable function on $\R^+$ such that
 $$ F(r)=O(e^{-a r^2}),\quad \cal H_\alpha F(s)=O(e^{-b s^2})$$ for some positive $a$ and $b.$ Then $F=0$ whenever $\displaystyle{ab>{1\over 4}}$ and 
$ F(r) = C e^{-a r^2} $ whenever $ \displaystyle{ab = {1\over 4}.} $ 
  \end{thm}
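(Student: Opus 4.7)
The strategy is to extend $\mathcal{H}_\alpha F$ to an entire function of a complex variable, bound its growth in imaginary directions using the Gaussian decay of $F$, and then combine this with the real-axis decay $|\mathcal{H}_\alpha F(s)|=O(e^{-bs^2})$ through a Phragm\'en--Lindel\"of argument.

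First I would note that the kernel $(rz)^{-\alpha}J_\alpha(rz)$ is entire and even in $z$, as is clear from the series for $J_\alpha$. The Gaussian bound on $F$ then permits one to define
$$G(z):=\int_0^\infty F(r)\,\frac{J_\alpha(rz)}{(rz)^\alpha}\,r^{2\alpha+1}\,dr,$$
an even entire function coinciding with $\mathcal{H}_\alpha F$ on $\R^+$. Combining the classical estimate $|w^{-\alpha}J_\alpha(w)|\le C(1+|w|)^{-\alpha-1/2}e^{|\operatorname{Im} w|}$ with $|F(r)|\le Ce^{-ar^2}$ and completing the square in $r$ yields
$$|G(x+iy)|\le C(1+|y|)^{2\alpha+1}\,e^{y^2/(4a)},\qquad x,y\in\R.$$

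Next, set $\Phi(z):=G(z)e^{bz^2}$, which is entire of order at most $2$. The hypothesis on $\mathcal{H}_\alpha F$ gives $|\Phi(s)|\le C$ for $s\in\R$, while the preceding bound yields $|\Phi(iy)|\le C(1+|y|)^{2\alpha+1}e^{(1/(4a)-b)y^2}$, which is bounded whenever $ab\ge 1/4$. I would then apply the Phragm\'en--Lindel\"of principle in each of the four open quadrants, using the regularization $\Phi(z)e^{\mp i\delta z^2}$ (the sign chosen so that the modification has negative real part throughout the quadrant) to bring the growth strictly below the critical order $2$ for a $\pi/2$-sector, and then letting $\delta\to 0^+$. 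This forces $|\Phi|\le C$ everywhere, so by Liouville $\Phi$ is a constant $C_0$, and consequently $G(z)=C_0 e^{-bz^2}$ and $\mathcal{H}_\alpha F(s)=C_0 e^{-bs^2}$ on $\R^+$.

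Finally, inverting the Hankel transform (which in the normalization (\ref{hankel}) is essentially self-inverse on its natural weighted $L^2$-space) yields $F(r)=C_1 e^{-r^2/(4b)}$. If $ab>1/4$ then $1/(4b)<a$, and the assumption $|F(r)|\le Ce^{-ar^2}$ forces $C_1=0$ and hence $F\equiv 0$; in the critical case $ab=1/4$ one has $1/(4b)=a$ and recovers $F(r)=Ce^{-ar^2}$. The main obstacle is the Phragm\'en--Lindel\"of step, because the growth of $\Phi$ sits exactly at the critical order for a $\pi/2$-sector: the $\delta$-regularization has to be organised carefully in each quadrant, and at the borderline $ab=1/4$ the polynomial factor $(1+|y|)^{2\alpha+1}$ in the growth estimate has to be absorbed, which requires a slight sharpening of the standard argument.
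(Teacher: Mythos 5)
First, a point of comparison: the paper does not prove Theorem \ref{hardy} at all --- it is imported as a black box from Tuan's paper \cite{Tu}. So your proposal can only be measured against the standard complex-analytic proof in the literature, and its overall architecture matches that proof: the entire extension $G$ of $\cal H_\alpha F$, the bound $|G(x+iy)|\le C(1+|y|)^{2\alpha+1}e^{y^2/(4a)}$ obtained from $|w^{-\alpha}J_\alpha(w)|\le Ce^{|\operatorname{Im}w|}$ by completing the square, the auxiliary function $\Phi(z)=G(z)e^{bz^2}$, Liouville, and the final Hankel inversion of $C_0e^{-bs^2}$ into $F(r)=C_1e^{-r^2/(4b)}$ with the comparison of $1/(4b)$ against $a$ are all correct and are exactly the right skeleton.

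The genuine gap is precisely at the step you yourself flag, and your proposed fix fails as stated. Multiplying by $e^{\pm i\delta z^2}$ does \emph{not} bring the growth below the critical order $2$: in the first quadrant, with the correct sign, $|\Phi(z)e^{i\delta z^2}|\le C(1+|y|)^{2\alpha+1}e^{bx^2-2\delta xy+(1/(4a)-b)y^2}$, and along a ray $z=re^{i\theta}$ with $\theta$ small the exponent is $\sim br^2\cos^2\theta>0$; the regularized function is still of order exactly $2$ with positive type, so Phragm\'en--Lindel\"of on a sector of opening $\pi/2$ remains inapplicable --- and it genuinely fails there, since $e^{-iz^2}$ has modulus $1$ on both edges of the first quadrant, is of order $2$ and finite type inside, yet is unbounded. (Note this difficulty afflicts the strict case $ab>1/4$ just as much as the borderline case, since the $e^{bx^2}$ growth near the real axis is always present.) What rescues the argument is the \emph{stronger} interior bound $|\Phi(z)|\le C(1+|\operatorname{Im}z|)^{2\alpha+1}e^{b(\operatorname{Re}z)^2}$, whose type $b\cos^2\theta$ decays to $0$ as the ray approaches the imaginary axis. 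The standard execution (as in the equality case of the classical Hardy theorem, and in \cite{Tu}) applies Phragm\'en--Lindel\"of on sectors $\{0\le\arg z\le \pi/2-\delta\}$ of opening \emph{strictly} less than $\pi/2$, with a rotated-Gaussian comparison factor $e^{\lambda_\delta z^2}$, $\lambda_\delta$ complex, chosen so that the product is bounded on both bounding rays, and then lets $\delta\to 0^+$ for each fixed $z$ in the open quadrant; the polynomial factor at $ab=1/4$ is absorbed beforehand by dividing by a suitable power of $(z+i)$, which is bounded away from zero on the quadrant. This sector-shrinking mechanism --- not a $\delta$-regularization over the whole quadrant --- is the crux of the theorem, and since you only gesture at it, the decisive step of your proof is missing; the rest of your outline is sound.
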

 
\section{Schr{\"o}dinger equation on $\HH^n\times \R$}

Let us consider the Schr{\"o}dinger equation on $\HH^n\times \R$
$$i\partial_s u({\bf z},t; s) = \cal L u({\bf z},t;s),$$ with the initial condition $u({\bf z},t;0)=f({\bf z},t).$ As the closure of $\cal L$ on $C_c^\infty (\HH^n)$ is a self-adjoint operator, $-i\cal L$ generates a unitary semi-group $e^{-is\cal L}$ on $L^2(\HH^n),$ and the solution of the above Schr$\rm {\ddot o}$dinger equation is given by  $$u({\bf z},t; s)=e^{-is\cal L} f({\bf z},t).$$

The main result of the paper is:
\begin{thm}\label{main} Let $u({\bf z},t; s)$ be the solution to the Schr{\"o}dinger equation  for the sub-Laplacian $\cal L$ with initial condition $f.$ Suppose that 
\begin{align}
\tag{3.1 a}\label{3.1a}
&\vert f({\bf z},t)  \vert \leq C q_a({\bf z},t),\\
\tag{3.1 b}\label{3.1b}
& \vert u({\bf z},t; s_0)|\leq C q_b({\bf z},t),
\end{align}
for some $a,b>0$ and for a fixed $s_{0} \in \R^*.$ Then $u({\bf z},t; s) =0$ on $\HH^n\times \R$ whenever $ab< s_{0}^2. $
\end{thm}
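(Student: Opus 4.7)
The plan is to take the partial Fourier transform in $t$ and, bigraded component by bigraded component, identify the Schr\"odinger evolution explicitly with a Hankel transform so that Theorem \ref{hardy} applies. Writing $u^\la({\bf z};s):=\int_\R e^{i\la t}u({\bf z},t;s)\,dt$, the operator $\cal L$ turns into the scaled twisted Laplacian $\cal L_\la$ on $\C^n$, and our equation reduces to $i\partial_s u^\la=\cal L_\la u^\la$ with $u^\la(\cdot;s)=e^{-is\cal L_\la}f^\la$. Using Fact \ref{fact2}, the decay hypotheses (\ref{3.1a}) and (\ref{3.1b}) give the uniform Gaussian bounds
\begin{equation*}
|f^\la({\bf z})|\le C\,e^{-|{\bf z}|^2/(4a)},\qquad |u^\la({\bf z};s_0)|\le C\,e^{-|{\bf z}|^2/(4b)},
\end{equation*}
and the exponential decay in $t$ of $q_a$ makes $\la\mapsto f^\la({\bf z})$ holomorphic on a horizontal strip, so that showing $f^\la\equiv 0$ for $\la$ in some punctured neighborhood of the origin will be enough.

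Next, I would expand $f^\la$ in the bigraded spherical harmonics via (\ref{sc}) and treat a single component $f^\la({\bf z})=P({\bf z})g^\la(|{\bf z}|)$ with $P\in\cal H_{p,q}$. On this piece $\cal L_\la$ has a Laguerre spectral decomposition with eigenvalues $(2k+n)|\la|$ for $k\ge p$, so Theorem \ref{HB} together with (\ref{hb}) yields
\begin{equation*}
e^{-is_0\cal L_\la}(Pg^\la)({\bf z})=C\,P({\bf z})\!\sum_{k\ge p} e^{-is_0(2k+n)|\la|}\,c_k(g^\la)\,L_{k-p}^{n+p+q-1}\!\Big(\tfrac{|\la|}{2}|{\bf z}|^2\Big)e^{-|\la||{\bf z}|^2/4},
\end{equation*}
with $c_k(g^\la)$ the Laguerre coefficient furnished by (\ref{hb}). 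The crucial step is to sum this series by Mehler's bilinear identity for Laguerre polynomials at the unimodular parameter $w=e^{-2is_0|\la|}$. Since $1-w=2ie^{-is_0|\la|}\sin(s_0|\la|)$, the real Gaussians of Mehler's kernel cancel exactly against the factors $e^{-|\la|(r^2+s^2)/4}$, leaving a pure oscillatory phase $e^{i|\la|\cot(s_0|\la|)(r^2+s^2)/4}$ together with a modified Bessel function of imaginary argument $|\la|rs/(2i\sin(s_0|\la|))$; the identity $I_\nu(iz)=i^\nu J_\nu(z)$ then converts this into the Bessel kernel of the Hankel transform $\cal H_\alpha$ of order $\alpha=n+p+q-1$. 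After the linear rescaling $\sigma=|\la|r/(2\sin(s_0|\la|))$, the evolved radial function is therefore, up to a unimodular phase and a nonzero constant, the Hankel transform $(\cal H_\alpha h^\la)(\sigma)$ of $h^\la(s):=g^\la(s)\,e^{i|\la|\cot(s_0|\la|)s^2/4}$.

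The hypotheses then translate to $|h^\la(s)|\le Ce^{-s^2/(4a)}$ and, after the above rescaling, $|\cal H_\alpha h^\la(\sigma)|\le C\exp\!\big(-\sigma^2\sin^2(s_0|\la|)/(b|\la|^2)\big)$, so Theorem \ref{hardy} forces $h^\la\equiv 0$ provided
\begin{equation*}
\frac{1}{4a}\cdot\frac{\sin^2(s_0|\la|)}{b|\la|^2}>\frac{1}{4},\qquad\text{i.e.,}\qquad ab<\frac{\sin^2(s_0\la)}{\la^2}.
\end{equation*}
Since $\sin^2(s_0\la)/\la^2\to s_0^2>ab$ as $\la\to 0$, this inequality holds throughout a punctured neighborhood of $\la=0$, so every bigraded component of $f^\la$ vanishes there, $f^\la\equiv 0$ in that neighborhood, and by the holomorphy noted above we conclude $f\equiv 0$ and $u\equiv 0$. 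The main obstacle I foresee is justifying the Mehler summation on the boundary $|w|=1$: the complex algebra has to be tracked carefully so that the real Gaussians genuinely cancel, and the resulting oscillatory Laguerre series (which converges only as a tempered distribution, or via Abel regularization $w=(1-\epsilon)e^{-2is_0|\la|}$ with $\epsilon\downarrow 0$) must be controlled well enough that we may apply the pointwise Gaussian estimates required by Theorem \ref{hardy}.
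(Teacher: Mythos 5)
Your outline follows the paper's argument step for step: partial Fourier transform in $t$, bigraded spherical harmonic decomposition via Theorem \ref{HB}, resummation of the Laguerre series by the Hille--Hardy identity into a Hankel transform with a chirp phase, Hardy's theorem for the Hankel transform (Theorem \ref{hardy}) for small $\la$, and holomorphy of $\la\mapsto f^\la$ to propagate the vanishing. Your bookkeeping is also right: the condition you derive, $ab<\sin^2(\la s_0)/\la^2$, with $\sin^2(\la s_0)/\la^2\to s_0^2$ as $\la\to 0$, is exactly the paper's. But the obstacle you flag at the end and leave unresolved --- justifying the Mehler summation at the unimodular parameter $w=e^{-2is_0|\la|}$, equivalently the fact that $e^{-is_0\cal L}$ has no integral kernel, so that $u^\la(\cdot;s_0)=f^\la\ast_\la q_{is_0}^\la$ is a priori only an $L^2$-spectral identity and the pointwise Gaussian bounds needed for Theorem \ref{hardy} are not available for a distributionally summed series --- is a genuine gap, and it is precisely the point the paper does not leave to Abel summation at the boundary. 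The paper removes it at the outset by a heat regularization: replace $f$ by $f_\epsilon:=e^{-\epsilon\cal L}f$ and work with $u_\epsilon(\cdot;s_0)=e^{-is_0\cal L}f_\epsilon=f\ast q_\zeta$, $\zeta=\epsilon+is_0$. Since $\Re\zeta>0$ the kernel $q_\zeta$ is a genuine function, the semigroup property (Fact \ref{fact1}) converts \eqref{3.1a} and \eqref{3.1b} into $|f_\epsilon|\leq Cq_{a+\epsilon}$ and $|u_\epsilon(\cdot\,;s_0)|\leq Cq_{b+\epsilon}$, and on the Fourier side $u_\epsilon^\la=f_\epsilon^\la\ast_\la q_{is_0}^\la$ is an absolutely convergent integral because $q_{is_0}^\la$ is a bounded pure-phase Gaussian (existing for all but countably many $\la$) and $f_\epsilon^\la$ has Gaussian decay; the Laguerre coefficients of $f_\epsilon^\la$ carry the geometric factor $e^{-(2k+n)|\la|\epsilon}$, so the series you worry about effectively lives at $|w|=e^{-2\epsilon|\la|}<1$ and every interchange of sum and integral is legitimate.

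Note also that with this device there is no limit $\epsilon\downarrow 0$ to control: because $ab<s_0^2$ is an open condition, one fixes $\epsilon$ with $(a+\epsilon)(b+\epsilon)<s_0^2$ and then $\delta>0$ so small that $(a+\epsilon)(b+\epsilon)<s_0^2\left(\sin(\la s_0)/(\la s_0)\right)^2$ for $0<\la<\delta$; Theorem \ref{hardy} then annihilates every coefficient $(f_\epsilon^\la)_{p,q,j}$ on $(0,\delta)$, the holomorphy argument (which survives regularization, the strip merely shrinking to $|{\rm Im}(\la)|<\pi/(2(a+\epsilon))$) gives $f_\epsilon\equiv 0$, and since $f_\epsilon^\la=f^\la\ast_\la q_\epsilon^\la$ with twisted convolution against the Gaussian injective, $f\equiv 0$ and hence $u\equiv 0$. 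So your proposal is the paper's proof minus its one enabling idea: insert the regularized problem with initial datum $e^{-\epsilon\cal L}f$ at the start, and your Abel-summation worry disappears rather than having to be confronted.
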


The remaining part of this section is devoted to the proof of the above statement. 

The heat kernel $ q_s({\bf z},t) $ has an analytic continuation in $ s $ as 
long as real part of $ s $ is positive. However, due to the zeros of the sine 
function, the kernel $  q_{is}({\bf z},t) $ does not exist as can be seen from 
the formula for $  q_s^\lambda({\bf z}).$ Hence the solution 
$u({\bf z},t; s)$ does not have an integral representation. We will therefore 
consider the following regularised problem on $\HH^n\times \R :$
\begin{align*}
&i\partial_s u_\epsilon({\bf z},t; s) = \cal L u_\epsilon({\bf z},t;s),\qquad \epsilon >0,\\
&u_\epsilon({\bf z},t;0)=f_\epsilon({\bf z},t),
\end{align*}
 where $f_\epsilon({\bf z},t) :=e^{-\epsilon \cal L} f({\bf z},t).$ The solution $u_\epsilon$ on $\HH^n\times \R$ is given by 
$$u_\epsilon ({\bf z},t;s)=e^{-is \cal L} f_\epsilon ({\bf z},t)=f\ast q_{\zeta}({\bf z},t),$$ where $\zeta=\epsilon+i s$ and 
$$q_{\zeta}({\bf z},t):={1\over {(8\pi^2 )^n}} \int_\R e^{-i\lambda t} \left({{\lambda}\over {\sinh \lambda \zeta}}\right)^n e^{-{1\over 4} \lambda (\coth \zeta \lambda) |{\bf z}|^2} d\lambda.$$
Observe that the kernel $  q_\zeta({\bf z},t) $ is well defined.

\begin{lem} Under the   assumptions \eqref{3.1a} and \eqref{3.1b}, we have
\begin{align*}
\tag{3.2 a}\label{3.2a}
&\vert f_\epsilon({\bf z},t)  \vert \leq C q_{a+\epsilon}({\bf z},t),\\
\tag{3.2 b}\label{3.2b}
& \vert u_\epsilon({\bf z},t; s_{0})|\leq C q_{b+\epsilon}({\bf z},t).
\end{align*}
\end{lem}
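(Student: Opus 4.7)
The plan is to obtain both bounds directly from the semigroup property of the heat kernel (Fact \ref{fact1}), exploiting that the regularization $e^{-\epsilon\cal L}$ acts as convolution with the nonnegative kernel $q_\epsilon$.

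For \eqref{3.2a}, I would write $f_\epsilon = f \ast q_\epsilon$ and push the absolute value inside the convolution integral. Since $q_\epsilon \geq 0$, the hypothesis $|f| \leq C q_a$ gives the pointwise estimate
\[
|f_\epsilon(\mathbf{z},t)| \leq |f| \ast q_\epsilon(\mathbf{z},t) \leq C\,(q_a \ast q_\epsilon)(\mathbf{z},t),
\]
and then Fact \ref{fact1} identifies the right-hand side with $C q_{a+\epsilon}(\mathbf{z},t)$.

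For \eqref{3.2b}, the key observation is that $e^{-\epsilon \cal L}$ and $e^{-i s_0 \cal L}$ commute, being functions of the same self-adjoint operator $\cal L$ via the functional calculus. Consequently
\[
u_\epsilon(\mathbf{z},t;s_0) = e^{-i s_0 \cal L} f_\epsilon = e^{-\epsilon \cal L} e^{-i s_0 \cal L} f = e^{-\epsilon \cal L} u(\cdot,\cdot;s_0)(\mathbf{z},t) = u(\cdot,\cdot;s_0) \ast q_\epsilon (\mathbf{z},t).
\]
Now the same reasoning as above applies: using $q_\epsilon \geq 0$ together with hypothesis \eqref{3.1b},
\[
|u_\epsilon(\mathbf{z},t;s_0)| \leq |u(\cdot,\cdot;s_0)| \ast q_\epsilon(\mathbf{z},t) \leq C\,(q_b \ast q_\epsilon)(\mathbf{z},t) = C\, q_{b+\epsilon}(\mathbf{z},t),
\]
again by Fact \ref{fact1}.

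There is no serious obstacle here; the statement is essentially a bookkeeping lemma ensuring that the Gaussian majorants assumed for $f$ and for $u(\cdot;s_0)$ persist after a small heat regularization, with the time parameter of the majorant shifted by $\epsilon$. The only point requiring a brief justification is the commutation of $e^{-\epsilon \cal L}$ with $e^{-i s_0 \cal L}$, and this is immediate from the self-adjointness of $\cal L$ on $L^2(\HH^n)$ which has already been invoked in the paper. The non-commutativity of the group convolution is not an issue: we never rearrange factors inside a convolution, we only use the trivial domination $|f \ast g| \le |f| \ast g$ valid whenever $g \ge 0$, and the semigroup identity $q_a \ast q_\epsilon = q_{a+\epsilon}$ is built into Fact \ref{fact1}.
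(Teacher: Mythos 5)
Your proposal is correct and follows essentially the same route as the paper: both bounds come from writing the regularized functions as convolutions with the nonnegative kernel $q_\epsilon$, dominating via $|f\ast q_\epsilon|\leq |f|\ast q_\epsilon$, and invoking the semigroup identity of Fact \ref{fact1}. The only difference is cosmetic: you spell out the commutation $e^{-\epsilon\cal L}e^{-is_0\cal L}=e^{-is_0\cal L}e^{-\epsilon\cal L}$ via the functional calculus, which the paper uses implicitly when it writes $u_\epsilon(\cdot,\cdot;s_0)=u(\cdot,\cdot;s_0)\ast q_\epsilon$.
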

\begin{proof} For the first estimate, we have 
\begin{align*}
\vert f_\epsilon({\bf z},t)  \vert =\vert e^{-\epsilon \cal L} f({\bf z},t)\vert &=\vert f\ast q_\epsilon ({\bf z},t)\vert \\
&\leq  C q_{a+\epsilon}({\bf z},t). 
\end{align*}
Above we have used the fact that $q_s$ is nonnegative and   Fact \ref{fact1}. Similarly we have
 \begin{align*}
 \vert u_\epsilon({\bf z},t; s_{0})| &=| u(\cdot, \cdot\,; s_{0})\ast q_\epsilon  ({\bf z},t)\vert \\
 &\leq C q_{b+\epsilon}({\bf z},t). 
 \end{align*}
\end{proof}

Recall that for $\lambda\in \R,$ the notation $f^\lambda({\bf z})$ stands for the inverse Fourier transform of $f({\bf z},t)$ in the $t$-variable.  
In view of the hypothesis \eqref{3.1a} on $f$ and the estimate  \eqref{hk} on the heat kernel, one can see that the function $\lambda\mapsto f^\lambda({\bf z})$ extends to a holomorphic  function of $\lambda$ on the strip 
$|{\rm Im}(\lambda)| < {\pi\over {2a}}.$ Thus the following statement is 
true. 

\begin{lem} Under the hypothesis  \eqref{3.1a} on $f,$ the inverse Fourier transform $f^\lambda({\bf z})$ of $f({\bf z},t)$ in the $t$-variable extends to a holomorphic function of $\lambda$ in a tubular neighborhood in $\C$ of the real line.
\end{lem}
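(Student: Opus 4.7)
The plan is to combine the pointwise hypothesis \eqref{3.1a} on $f$ with the explicit Gaussian/exponential decay of the heat kernel recorded in Fact \ref{fact2} in order to estimate the integrand of $f^\lambda(\mathbf{z})=\int_{\R} e^{i\lambda t}f(\mathbf{z},t)\,dt$ uniformly in a horizontal strip around the real axis, and then apply a standard holomorphy-under-the-integral argument.

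First I would insert the estimate from Fact \ref{fact2}, with $s=a$, into the hypothesis \eqref{3.1a}, obtaining a bound of the shape
\[
|f(\mathbf{z},t)| \le C_a\, e^{-\frac{\pi}{2a}|t|}\,e^{-\frac{|\mathbf{z}|^2}{4a}},\qquad (\mathbf{z},t)\in\HH^n.
\]
Write $\lambda=\mu+i\nu\in\C$. Then $|e^{i\lambda t}|=e^{-\nu t}\le e^{|\nu||t|}$, so for fixed $\mathbf{z}$,
\[
\bigl|e^{i\lambda t}f(\mathbf{z},t)\bigr| \le C_a\, e^{-\frac{|\mathbf{z}|^2}{4a}}\, e^{-\bigl(\frac{\pi}{2a}-|\nu|\bigr)|t|}.
\]
Hence, as long as $\lambda$ lies in the tube
\[
\Omega_a := \Bigl\{\lambda\in\C \;\Big|\; |\mathrm{Im}(\lambda)|<\tfrac{\pi}{2a}\Bigr\},
\]
the integrand is dominated by an integrable (in $t$) function independent of $\lambda$ on any compact subset of $\Omega_a$. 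In particular, $f^\lambda(\mathbf{z})=\int_{\R}e^{i\lambda t}f(\mathbf{z},t)\,dt$ is absolutely convergent on $\Omega_a$ and defines a continuous function of $\lambda$ there.

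To upgrade continuity to holomorphy, I would use the classical Morera/differentiation-under-the-integral-sign criterion. The integrand $\lambda\mapsto e^{i\lambda t}f(\mathbf{z},t)$ is entire for each fixed $(\mathbf{z},t)$, and on any compact $K\subset\Omega_a$ we have the uniform majorant $|e^{i\lambda t}f(\mathbf{z},t)|\le C_a\,e^{-\frac{|\mathbf{z}|^2}{4a}}e^{-\delta_K|t|}$ for some $\delta_K>0$, which is integrable in $t$. Standard arguments (Fubini--Morera, or direct verification that $\partial_{\bar\lambda}$ commutes with the integral) then yield that $\lambda\mapsto f^\lambda(\mathbf{z})$ is holomorphic on $\Omega_a$, which is a tubular neighborhood of $\R$ in $\C$.

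There is no real obstacle here: the only point requiring a little care is that Fact \ref{fact2} is stated for $s>0$, so one must use it at $s=a$ (not at some complex parameter) to get the honest exponential decay $e^{-\frac{\pi}{2a}|t|}$ in $t$; this decay is precisely what controls the extra factor $e^{|\nu||t|}$ introduced by shifting $\lambda$ off the real axis, and it is what pins down the width $\pi/(2a)$ of the strip of holomorphy.
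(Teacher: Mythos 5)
Your proposal is correct and follows essentially the same route as the paper, which disposes of the lemma in one sentence by combining the hypothesis \eqref{3.1a} with the heat kernel estimate \eqref{hk} to get the exponential decay $e^{-\frac{\pi}{2a}|t|}$ in $t$ and hence holomorphy of $\lambda \mapsto f^\lambda({\bf z})$ on the strip $|\mathrm{Im}(\lambda)| < \frac{\pi}{2a}$. Your write-up merely makes explicit the dominated-convergence/Morera details that the paper leaves implicit.
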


We point out that the above lemma  also holds for the function $\lambda\mapsto f_\epsilon^\lambda.$

\begin{stra} To prove the main theorem, our strategy is to show that $f=0$ on $\HH^n$ whenever $ab< s_{0}^2.$ However, by the above lemma, showing that $f^\lambda=0$ on $\C^n$ for $0<\lambda<\delta,$ for some $\delta>0,$ will force $f^\lambda=0$ on $\C^n$ for all $\lambda\in \R$ and hence $f=0$ on $\HH^n.$ Furthermore, since $f_\epsilon^\lambda=f^\lambda\ast_\lambda q_\epsilon^\lambda,$ then proving that $f^\lambda =0$ on $\C^n$ for   $0<\lambda<\delta$ is equivalent to show  the same statement for $f_\epsilon^\lambda.$ On the other hand, in order to prove that $f_\epsilon^\lambda({\bf z}) =0$ for  $0<\lambda<\delta,$ for some $\delta >0,$ it is enough to prove  that the spherical harmonic 
coefficients 
$$(f_\epsilon^\lambda)_{p,q,j}(r)=\int_{S^{2n-1}} f_\epsilon^\lambda(r\omega) \overline{Y_{p,q}^j(\omega)} d\sigma(\omega)$$ vanish for  $0<\lambda<\delta,$ for all $p,q\geq 0$ and $1\leq j\leq d(p,q).$   In conclusion,  the proof of  the main theorem reduces to prove that if $ab<  s_{0}^2,$ then $(f_\epsilon^\lambda)_{p,q,j} =0$  on $\R^+$ for  $0<\lambda<\delta,$   for all $p,q\geq 0$ and $1\leq j\leq d(p,q).$
\end{stra}

The following theorem will be of crucial importance to us.   
\begin{thm} \label{crucial}Let us fix $p_{0}, q_{0} \geq 0$ and $1\leq j_{0}
\leq d(p_{0}, q_{0}).$ For all $r>0,$ there exists a constant $c_\lambda$ which depends only on $\lambda$ such that  
$$\int_{S^{2n-1}} u_\epsilon^\lambda(r\omega; s_{0}) \overline{Y_{p_{0},q_{0}}^{j_{0}}(\omega)} d\sigma(\omega)=
c_\lambda r^{p_{0}+q_{0}} e^{i{{ \lambda }\over 4} r^2 \cotg ( \lambda s_{0})}
\cal H_{n+p_{0}+q_{0}-1}\left( e^{i{{ \lambda }\over 4}(\cdot)^2 \cotg ( \lambda s_{0})}  (f_\epsilon^\lambda)_{p_{0},q_{0},j_{0}}^{\sim} \right) \left( {{\lambda r}\over {2\sin (\lambda s_{0})}}\right),$$ 
where $u_\epsilon^\lambda( {\bf z};s_{0})$ denotes the inverse Fourier transform of $u_\epsilon ( {\bf z} , t;s_{0})$ in the $t$-variable,  $\cal H_\alpha$ denotes the Hankel transform of order $\alpha$ (see \eqref{hankel}), and $(f_\epsilon^\lambda)_{p_{0},q_{0},j_{0}}^{\sim}(t):=t^{-(p_0 +q_0)} (f_\epsilon^\lambda)_{p_{0},q_{0},j_{0}}(t).$
\end{thm}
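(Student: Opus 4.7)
The plan is to rewrite the twisted convolution defining $u_\epsilon^\lambda(\cdot;s_0)$ as an ordinary Fourier transform on $\R^{2n}$, and then apply the classical Bochner identity for a solid harmonic times a radial function. Since $u_\epsilon(\cdot,\cdot;s_0)=f\ast q_\zeta$ with $\zeta=\epsilon+is_0$, the Heisenberg semigroup property combined with $f_\epsilon=f\ast q_\epsilon$ yields $u_\epsilon^\lambda(\cdot;s_0)=f_\epsilon^\lambda\ast_\lambda q_{is_0}^\lambda$, and the identities $\sinh(i\lambda s_0)=i\sin(\lambda s_0)$, $\coth(i\lambda s_0)=-i\cot(\lambda s_0)$ applied to the closed form for $q_s^\lambda$ recalled in Section~2 give
$$q_{is_0}^\lambda({\bf z})=c_n(\lambda)\,e^{i\frac{\lambda}{4}\cot(\lambda s_0)|{\bf z}|^2}$$
for an explicit $\lambda$-dependent constant $c_n(\lambda)$ (well-defined whenever $\sin(\lambda s_0)\neq 0$). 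Expanding $|{\bf z}-{\bf w}|^2=|{\bf z}|^2+|{\bf w}|^2-2\,\mathrm{Re}({\bf z}\cdot\bar{\bf w})$ in the twisted-convolution integrand, factoring out the pure ${\bf z}$-exponential, and using the identity $1+i\cot\theta=ie^{-i\theta}/\sin\theta$ (with $\theta=\lambda s_0$) to collapse the cross term
$$\tfrac{\lambda}{2}\cot(\lambda s_0)\,\mathrm{Re}({\bf z}\cdot\bar{\bf w})+\tfrac{\lambda}{2}\,\mathrm{Im}({\bf z}\cdot\bar{\bf w})=\tfrac{\lambda}{2\sin(\lambda s_0)}\,\mathrm{Re}\!\left(e^{-i\lambda s_0}{\bf z}\cdot\bar{\bf w}\right),$$
one recognizes the remaining integral as the Euclidean Fourier transform of $h({\bf w}):=f_\epsilon^\lambda({\bf w})\,e^{i\frac{\lambda}{4}\cot(\lambda s_0)|{\bf w}|^2}$ evaluated at $\xi=\frac{\lambda}{2\sin(\lambda s_0)}e^{-i\lambda s_0}{\bf z}$, because $\mathrm{Re}({\bf u}\cdot\bar{\bf v})$ is exactly the real inner product on $\C^n\cong\R^{2n}$.

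Next I expand $f_\epsilon^\lambda$ in bigraded solid harmonics. The $(p_0,q_0,j_0)$-component of $h$ reads $P_{p_0,q_0,j_0}({\bf w})\,G(|{\bf w}|)$ with $P_{p_0,q_0,j_0}\in\cal H_{p_0,q_0}$ the solid extension of $Y_{p_0,q_0}^{j_0}$ (so that $P_{p_0,q_0,j_0}({\bf w})=|{\bf w}|^{p_0+q_0}Y_{p_0,q_0}^{j_0}({\bf w}/|{\bf w}|)$) and $G(t)=(f_\epsilon^\lambda)^\sim_{p_0,q_0,j_0}(t)\,e^{i\frac{\lambda}{4}\cot(\lambda s_0)t^2}$. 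The classical Bochner identity on $\R^{2n}$,
$$\widehat{Pg}(\xi)=(-i)^{p+q}(2\pi)^n P(\xi)\,\cal H_{n+p+q-1}[g](|\xi|)\qquad(P\in\cal H_{p,q},\ g\text{ radial}),$$
then expresses $\widehat h$ as a sum of solid harmonics in $\xi$ weighted by Hankel transforms. Setting ${\bf z}=r\omega$ and $\xi=ar\omega$ with $a=\frac{\lambda}{2\sin(\lambda s_0)}e^{-i\lambda s_0}$, the bihomogeneity $P_{p,q,j}(\xi)=a^p\bar a^q r^{p+q}P_{p,q,j}(\omega)$ together with the orthonormality of $\{Y_{p,q}^j\}$ on $S^{2n-1}$ collapses the spherical integral to the $(p_0,q_0,j_0)$ summand. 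Absorbing $c_n(\lambda)$, $(-i)^{p_0+q_0}(2\pi)^n$, $a^{p_0}\bar a^{q_0}$, and any sign arising from $|\xi|$ versus $\frac{\lambda r}{2\sin(\lambda s_0)}$ into a single $\lambda$-dependent constant $c_\lambda$ produces the stated identity.

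The main obstacle is phase bookkeeping. The first delicate step is the trigonometric identity $1+i\cot\theta=ie^{-i\theta}/\sin\theta$ that turns the twisted convolution into an honest Euclidean Fourier transform at a complex-rotated frequency. The second is matching the induced rotation $\omega\mapsto e^{-i\lambda s_0}\omega$—under which a $(p,q)$-bigraded spherical harmonic acquires the phase $e^{-i(p-q)\lambda s_0}$—against the phase content of $a^{p_0}\bar a^{q_0}=(\lambda/(2\sin(\lambda s_0)))^{p_0+q_0}e^{-i(p_0-q_0)\lambda s_0}$, so that the residual dependence on $r$ outside the Hankel transform is cleanly $r^{p_0+q_0}e^{i\frac{\lambda}{4}\cot(\lambda s_0)r^2}$, exactly as claimed.
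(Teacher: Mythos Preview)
Your argument is correct and arrives at the same formula, but by a genuinely different route from the paper's. The paper expands $q_{is_0}^\lambda$ as a Laguerre series $\sum_k e^{-i(2k+n)|\lambda|s_0}\varphi_{k,\lambda}^{n-1}$, applies the Hecke--Bochner identity for the \emph{twisted} convolution (their Theorem~\ref{HB}) term by term, and then resums the resulting double Laguerre series via the Hille--Hardy generating function to produce the Bessel kernel. Your approach instead keeps $q_{is_0}^\lambda$ in closed Gaussian form, completes the cross terms using $\cot\theta\,\mathrm{Re}(u)+\mathrm{Im}(u)=\sin^{-1}\theta\,\mathrm{Re}(e^{-i\theta}u)$, and thereby recognises $u_\epsilon^\lambda$ (up to a radial phase) as the ordinary Euclidean Fourier transform of $h=f_\epsilon^\lambda e^{i(\lambda/4)\cot(\lambda s_0)|\cdot|^2}$ evaluated at the $U(1)$-rotated point $\xi=\frac{\lambda}{2\sin(\lambda s_0)}e^{-i\lambda s_0}{\bf z}$; the classical Bochner identity on $\R^{2n}$ then yields the Hankel transform in one step, and the bihomogeneity $P_{p,q,j}(a{\bf z})=a^p\bar a^q P_{p,q,j}({\bf z})$ handles the rotation cleanly.

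What each approach buys: yours is shorter and more elementary---it bypasses the Laguerre calculus and the Hille--Hardy identity entirely, relying only on the explicit form of $q_{is_0}^\lambda$ and the standard Euclidean Hecke--Bochner theorem. The paper's approach is more intrinsic to Heisenberg analysis: it never leaves the twisted-convolution framework and would adapt more readily to situations where the propagator is given spectrally rather than by a closed Gaussian formula. One small remark: the constant you call $c_\lambda$ absorbs factors such as $(-i)^{p_0+q_0}$ and $a^{p_0}\bar a^{q_0}$, so it depends on $p_0,q_0$ as well as $\lambda$; the paper's constant has the same hidden dependence, and since $p_0,q_0,j_0$ are fixed throughout the statement this is harmless.
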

\begin{proof} In what follows $c_\lambda$ will stand for constants depending 
 only on $\lambda$ which will vary from one line to another. Using  Fact \ref{fact2}  we can rewrite $u_\epsilon^\lambda( {\bf z};s_{0})$ as $$u_\epsilon^\lambda( {\bf z};s_{0})=f_\epsilon^\lambda\ast_\lambda q_{is_{0}}^\lambda({\bf z}),$$ where $$q_{is_{0}}^\lambda({\bf z})=(4\pi)^{-n} \left( {{\lambda}\over {i\sin \lambda s_{0}}}\right)^n e^{{i\over 4} \lambda (\cotg \lambda s_{0})| {\bf z}|^2} $$ which exits for all but a countably many values of $ \lambda.$ Thus  
\begin{align*}
&\int_{S^{2n-1}} u_\epsilon^\lambda(r\omega; s_{0}) \overline{Y_{p_{0},q_{0}}^{j_{0}}(\omega)} d\sigma(\omega)\\
&= \int_{S^{2n-1}} \Big[ \int_{\C^n} f_\epsilon^\lambda (r\omega  -{\bf w}) q_{is_{0}}^\lambda({\bf w}) e^{i{\lambda\over 2} {\rm Im}( r \omega\cdot \bar {\bf w})} d{\bf w}\Big]  \overline{Y_{p_{0},q_{0}}^{j_{0}}(\omega)} d\sigma(\omega)\\
&= \int_{S^{2n-1}} \Big[ \int_{\C^n} f_\epsilon^\lambda ({\bf w}) q_{is_{0}}^\lambda(r\omega  -{\bf w}) e^{-i{\lambda\over 2} {\rm Im}(r\omega\cdot \bar {\bf w})} d{\bf w}\Big]  \overline{Y_{p_{0},q_{0}}^{j_{0}}(\omega)} d\sigma(\omega).
\end{align*}
We now expand $f_\epsilon^\lambda$ in terms of bigraded spherical harmonics as $$f_\epsilon^\lambda(t \eta)=\sum_{p,q\geq 0} \sum_{j=1}^{d(p,q)} (f_\epsilon^\lambda)_{p,q,j}(t)  Y_{p,q}^j (\eta), $$ where 
$(f_\epsilon^\lambda)_{p,q,j}$ is as in \eqref{sc}. Further, by \cite[(2.8.7)]{Th} we have
$$q_{is_{0}}^\lambda( r\omega-t\eta)=(2\pi)^{-n} |\lambda|^n \sum_{k=0}^\infty e^{-i(2k+n) |\lambda | s_{0}}\varphi_{k,\lambda}^{n-1}(r\omega-t \eta),$$ where $\varphi_{k,\lambda}^{n-1}$ is given by \eqref{hf}.
Now the Hecke-Bochner formula for the $\lambda$-twisted convolution (see Theorem  \ref{HB}) gives us 
\begin{align*}
&\int_0^\infty \int_{S^{2n-1}} (f_\epsilon^\lambda)_{p,q,j}(t) Y_{p,q}^j(\eta) \varphi_{k,\lambda}^{n-1}
(r\omega-t\eta)e^{-i {\lambda\over 2} rt {\rm Im}(\omega\cdot \bar \eta)} t^{2n-1} dt d\sigma(\eta)\\
&=\int_0^\infty \int_{S^{2n-1}} (f_\epsilon^\lambda)_{p,q,j}^{\sim}(t) P_{p,q}^j(t \eta) \varphi_{k,\lambda}^{n-1}
(r\omega-t\eta)e^{-i {\lambda\over 2} rt {\rm Im}(\omega\cdot \bar \eta)} t^{2n-1} dt d\sigma(\eta)\\
&=
\Big[  (f_\epsilon^\lambda)_{p,q,j}^{\sim}\,P_{p,q}^j \Big] \ast_{-\lambda} \varphi_{k,\lambda}^{n-1}(r\omega)\\
&=(2\pi)^{-n} |\lambda|^{p+q} P_{p,q}^j(r\omega) \Big[ (f_\epsilon^\lambda)_{p,q,j}^{\sim} \ast_{-\lambda} \varphi_{k-p,\lambda}^{n+p+q-1}\Big](r\omega), 
\end{align*}
where the convolution on the right hand side is on $\C^{n+p+q}.$ Here $P_{p,q}^j( r\omega):=r^{p+q} Y_{p,q}^j(\omega)$ and $F^{\sim} (t) =t^{-(p+q)} F(t).$ Above we have used the fact that  $\varphi_{k,\lambda}^{n-1} =\varphi_{k,-\lambda}^{n-1}.$ Using the orthogonality of the basis 
$\{Y_{p,q}^j\;:\; 1\leq j\leq d(p,q)\}$ we obtain:
\begin{align*}
&\int_{S^{2n-1}} \Big[ \int_{\C^n} f_\epsilon^\lambda ({\bf w}) q_{is_{0}}^\lambda(r\omega  -{\bf w}) e^{-i{\lambda\over 2} {\rm Im}(r\omega\cdot \bar {\bf w})} d{\bf w}\Big]  \overline{Y_{p_{0},q_{0}}^{j_{0}}(\omega)} d\sigma(\omega)\\
&=c_\lambda  r^{p_{0}+q_{0}} \sum_{k\geq p_{0}} e^{-i(2k+n) |\lambda| s_{0}} 
(f_\epsilon^\lambda)_{p_{0},q_{0},j_{0}}^{\sim} \ast_{-\lambda} \varphi_{k-p_{0},\lambda}^{n+p_{0}+q_{0}-1}(r\omega)\\
&=c_\lambda  r^{p_{0}+q_{0}} \sum_{k=0}^\infty e^{-i(2k+n+2p_{0}) |\lambda| s_{0}} 
(f_\epsilon^\lambda)_{p_{0},q_{0},j_{0}}^{\sim} \ast_{-\lambda} \varphi_{k ,\lambda}^{n+p_{0}+q_{0}-1}(r\omega). 
\end{align*}
On the other hand, by \eqref{hb} we have 
\begin{align*}
& (f_\epsilon^\lambda)_{p_{0},q_{0},j_{0}}^{\sim} \ast_{-\lambda} \varphi_{k,\lambda}^{n+p_{0}+q_{0}-1} (r\omega) =c_\lambda 
 {{ \Gamma(k+1)}\over {\Gamma(k+n+p_{0}+q_{0})}} \\
&\qquad\left( \int_0^\infty  (f_\epsilon^\lambda)_{p_{0},q_{0},j_{0}}^{\sim} (t) \varphi_{k,\lambda}^{n+p_{0}+q_{0}-1}(t) t^{2(n+p_{0}+q_{0})-1}dt\right) \varphi_{k,\lambda}^{n+p_{0}+q_{0}-1}(r\omega).
\end{align*}
Hence we obtain
\begin{align*}
&\int_{S^{2n-1}} u_\epsilon^\lambda(r\omega; s_{0})  \overline{Y_{p_{0},q_{0}}^{j_{0}}(\omega)} d\sigma(\omega) \\
&= c_\lambda  r^{p_{0}+q_{0}} \sum_{k=0}^\infty {{\Gamma(k+1)}\over {\Gamma( k+n+p_{0}+q_{0})}} e^{-i(2k+n+2p_{0})|\lambda| s_{0}} 
L_k^{n+p_{0}+q_{0}-1}\Big({{|\lambda|}\over 2} r^2\Big) e^{-{{|\lambda|}\over 4} r^2}\\
&\qquad \Big( \int_0^\infty (f_\epsilon^\lambda)^\sim_{p_{0}, q_{0}, j_{0}} (t) \varphi_{k,\lambda}^{n+p_{0}+q_{0}-1}(t) t^{2(n+p_{0}+q_{0})-1} dt\Big)\\
&=c_\lambda r^{p_{0}+q_{0}} \int_0^\infty (f_\epsilon^\lambda)^\sim_{p_{0}, q_{0}, j_{0}} (t) K_{\lambda}( r,t; s_{0}) t^{2(n+p_{0}+q_{0})-1} dt, 
\end{align*}
where 
$$K_\lambda(r,t;s_{0}) :=
\sum_{k=0}^\infty {{\Gamma(k+1)}\over {\Gamma( k+n+p_{0}+q_{0})}}  
e^{-i(2k+n+2p_{0})|\lambda| s_{0}}  e^{-{{|\lambda|}\over 4} (r^2+t^2)}
L_k^{n+p_{0}+q_{0}-1}\left({{|\lambda|}\over 2} r^2\right) 
L_k^{n+p_{0}+q_{0}-1}\left({{|\lambda|}\over 2} t^2\right)  .$$ 
Now we can use the following Hille-Hardy identity  (see for instance \cite{Th1})
$$\sum_{k=0}^\infty {{\Gamma(k+1)}\over {\Gamma(k+\alpha+1)}} L_k^{\alpha} (x) L_k^{\alpha}(y) w^k=(1-w)^{-(\alpha+1)}
  e^{-{w\over {1-w}}(x+y)} \widetilde{J}_\alpha\left({{2(-xyw)^{1/2}}\over {1-w}}\right),$$ where $\widetilde{J}_\alpha(w):=\left({w\over 2}\right)^{-\alpha} J_\alpha(w)$ and $J_\alpha$    is the Bessel function of order $\alpha.$ Thus we may rewrite the kernel $K_\lambda$ as 
$$K_\lambda( r,t;s_{0})=   e^{i |\lambda| s_{0}(q_{0}-p_{0})} (2i\sin (|\lambda| s_{0}))^{-(n+p_{0}+q_{0})} e^{i{{ \lambda }\over 4}(r^2+t^2) \cotg ( \lambda s_{0})} \widetilde J_{n+p_{0}+q_{0}-1}\left( {{\lambda} \over 2} {{rt}\over {\sin (\lambda  s_{0})}}\right).$$
Thus we arrive at 
\begin{align*}
&\int_{S^{2n-1}} u_\epsilon^\lambda(r\omega; s_{0})  \overline{Y_{p_{0},q_{0}}^{j_{0}}(\omega)} d\sigma(\omega) \\ 
&=c_\lambda r^{p_{0}+q_{0}}  \int_0^\infty e^{i{{ \lambda }\over 4}(r^2+t^2) \cotg ( \lambda s_{0})}   (f_\epsilon^\lambda)_{p_{0},q_{0},j_{0}}^{\sim}  (t) 
{\widetilde J}_{n+p_{0}+q_{0}-1}\left( {{\lambda} \over 2} {{rt}\over {\sin (\lambda  s_{0})}}\right)
t^{2(n+p_{0}+q_{0})-1} dt\\
&=c_\lambda r^{p_{0}+q_{0}} e^{i{{ \lambda }\over 4} r^2 \cotg ( \lambda s_{0})}
\cal H_{n+p_{0}+q_{0}-1}\left( e^{i{{ \lambda }\over 4}(\cdot)^2 \cotg ( \lambda s_{0})}  (f_\epsilon^\lambda)_{p_{0},q_{0},j_{0}}^{\sim} \right) \left( {{\lambda r}\over {2\sin (\lambda s_{0})}}\right).
\end{align*}
Hence Theorem \ref{crucial} has been proved.
\end{proof}

We are ready to complete the proof of the main result.

The estimate  \eqref{3.2a} on $f_\epsilon({\bf z},t)$ together with Fact \ref{fact1} lead  us to 
$$| f_\epsilon^\lambda({\bf z}) | \leq c e^{-{1\over 4} {{| {\bf z} |^2}\over {a+\epsilon}}},$$ for some constant $c.$ Thus, the spherical harmonic 
coefficient     $(f_\epsilon^\lambda)_{p_{0},q_{0},j_{0}}^{\sim}$ satisfies 
$$| (f_\epsilon^\lambda)_{p_{0},q_{0},j_{0}}^{\sim} (t)| \leq c t^{-(p_{0}+q_{0})} e^{-{1\over 4} {{t^2}\over {a+\epsilon}}}.$$ 
On the other hand, by means of Theorem \ref{crucial} and the estimate \eqref{3.2b}   on $u_\epsilon({\bf z},t;s_{0}),$ we deduce that  
 $$  \left | \cal H_{n+p_{0}+q_{0}-1}\left( e^{i{{ \lambda }\over 4}(\cdot)^2 \cotg ( \lambda s_{0})}  (f_\epsilon^\lambda)_{p_{0},q_{0},j_{0}}^{\sim} \right) \left( {{\lambda r}\over {2\sin (\lambda s_{0})}}\right)\right | \leq c_\lambda r^{-(p_{0}+q_{0})}     e^{-{1\over 4} {{r^2}\over {b+\epsilon}}}.$$ That is 
 $$   \left | \cal H_{n+p_{0}+q_{0}-1}\left( e^{i{{ \lambda }\over 4}(\cdot)^2 \cotg ( \lambda s_{0})}  (f_\epsilon^\lambda)_{p_{0},q_{0},j_{0}}^{\sim} \right)  (r) \right | \leq c_\lambda   r^{-(p_{0}+q_{0})}     e^{-{1\over 4} \left( {    {2\sin (\lambda s_{0})} \over {\lambda s_{0}} }\right)^2  {{r^2 s_{0}^2}\over {b+\epsilon}}}.$$
Given $a,b>0$ such that $ab< s_{0}^2 $ we can choose $\epsilon >0$ such that $(a+\epsilon)(b+\epsilon) < s_{0}^2.$ We can also choose $\delta >0$ small enough in such a way that for $0<\lambda<\delta$ we have 
   $ (a+\epsilon )(b+\epsilon) < s_{0}^2 \left( {{ \sin (\lambda s_{0})}\over { \lambda s_{0}
}    }\right)^2.$ This inequality can be written as 
$${1\over {4(a+\epsilon)}} {{s_{0}^2}\over { 4(b+\epsilon)}} \left( {{  2\sin (\lambda s_{0})}\over { \lambda s_{0}
}    }\right)^2>{1\over 4}.$$ Therefore, by Hardy's theorem for the Hankel transform (see Theorem \ref{hardy}), we deduce that for $0<\lambda<\delta$ we have  $(f_\epsilon^\lambda)_{p_{0},q_{0},j_{0}}^{\sim} =0,$ for all $p_{0},q_{0}\geq 0$ and $1\leq j_{0}\leq d(p_{0}, q_{0}).$  That is $f_\epsilon^\lambda =0$ on $\C^n$ for $0<\lambda<\delta,$ which forces $f_\epsilon ^\lambda =0$ for all $\lambda$ and hence $f_\epsilon= 0$ on $\HH^n.$  That is  $f= 0$ on $\HH^n.$  This finishes the proof Theorem \ref{main}. 
 
\section{The main result for $H$-type groups}  
Let $\g$ be a two step nilpotent Lie algebra over $\R$ with an inner product $\langle \cdot,\cdot\rangle.$  The corresponding simply connected Lie group is denoted by $G.$  Let $\z$ be the center of $\g$ and $\v$ the orthogonal complement of $\z$ in $\g.$ The Lie algebra $\g$ is called an $H$-type algebra if for every ${\bf v}\in \v,$ the map $\ad_{\bf v}:\v\rightarrow \z$ is a surjective isometry when restricted to the orthogonal complement of its kernel. 

For the $H$-type algebra $\g= \v\oplus \z,$ let $\dim(\v)=2n$ and $\dim(\z)=k.$ The class of groups of $H$-type includes the Heisenberg group $\HH^n$ when $k=1.$ Let $\eta$ be a unit element in $\z$ and denote its orthogonal complement 
in $\z$ by $\eta^\perp. $ The quotient algebra $\g/\eta^\perp$ is a Lie 
algebra with Lie bracket $[ X, Y]_\eta=\langle [X,Y], \eta\rangle.$ 

The quotient  $\g/\eta^\perp$  is an $H$-type algebra with inner product $\langle\cdot,\cdot \rangle _\eta$ given by 
$\langle ({\bf v}_1, { t}_1),  ({\bf v}_2, {  t}_2)\rangle_\eta=\langle {\bf v}_1,{\bf v}_2\rangle + t_1t_2,$ where ${\bf v}_1,{\bf v}_2\in \v,$  $t_1,t_2\in \R,$ and $ \langle {\bf v}_1,{\bf v}_2\rangle$ is the inner product in $\g.$  Here  $({\bf v} , { t} )$ stands for the coset of  ${\bf v}+  { t}\eta$ in $\g/\eta^\perp.$ Moreover, if we denote by $G_\eta$ the simply connected Lie group with Lie algebra  $\g/\eta^\perp$, then by \cite{Ri}, the Lie group $G_\eta$ is  isomorphic to the Heisenberg group $\HH^n=\C^n\times \R.$ We refer to \cite{B} for more details on the theory of $H$-type groups.
 
We fix an orthonormal basis $X_1,\ldots, X_{2n}$ for $\v,$ and define the sub-Laplacian  by 
$$\cal L=-\sum_{j=1}^{2n}X_j^2.$$ It is known that $\cal L$ generates a semigroup   which is given by convolution with the heat kernel for $G.$ As in the case of Heisenberg group, the kernel is explicitly known and is given by 
$$h_s({\bf v}, {\bf t})={1\over {2^n (2\pi)^{n+k/2}}} \int_0^\infty 
{ {\lambda^{k/2}}\over{ | {\bf t}|^{{k/2}-1} }} J_{k/2-1}(\lambda | {\bf t} | ) \left( {{\lambda}\over {\sh(s\lambda)} }\right)^{n} e^{-{1\over 4} \lambda (\coth s \lambda) |{\bf v}|} d\lambda,$$ for $ ({\bf v}, {\bf t}) \in G$ and $s>0.$ Here $J_\alpha$ denotes the Bessel function of order $\alpha.$ This formula has been proved in \cite{Ra}, where the author also obtains the integral expression for the analytic continuation, $h_\zeta,$ of the heat kernel $h_s$ as long as ${\rm Rel}(\zeta)>0.$ 
 
We now consider the solution of the Schr{\rm \"o}dinger equation  on $G\times \R$
\begin{align*}
&i\partial_s u({\bf v}, {\bf t}; s) =\cal L u({\bf v}, {\bf t};s),\\
&u({\bf v}, {\bf t}; 0)=f({\bf v}, {\bf t}), 
\end{align*}
which is given by $u({\bf v}, {\bf t};s) =e^{-i s\cal L} f({\bf v}, 
{\bf t}).$ When we replace the initial condition $f$ by $e^{-\epsilon \cal L} f,$ for some $\epsilon >0,$ then the solution is given by 
$$u_\epsilon({\bf v}, {\bf t};s) =f\ast h_\zeta ({\bf v}, {\bf t}),  \qquad \zeta =\epsilon+i s. $$

We claim that the uniqueness Theorem \ref{main} for the Schr{\rm \"o}dinger equation on $\HH^n\times \R$ is true in the more general setting $G\times \R.$ The rest of this section is devoted to the proof of the following theorem.

\begin{thm}\label{Htype} Let $u({\bf v}, {\bf t}; s)$ be the solution of the 
Schr{\rm \"o}dinger equation on  $G\times \R,$ with initial data $f.$  
Assume that  $| f({\bf v}, {\bf t})| \leq C h_a  ({\bf v}, {\bf t}) $ for 
some $a>0.$ Further, suppose that there exists  $s_0 \in \R\setminus\{0\}$ such that  
$|u({\bf v}, {\bf t}; s_0)| \leq C h_b ({\bf v}, {\bf t})$  for some $b>0.$  
If $ab< s_0 ^2,$ then   $u({\bf v}, {\bf t}; s ) =0$ for all $({\bf v}, {\bf t})\in G$ and for all $s\in \R.$
\end{thm}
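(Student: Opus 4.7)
The plan is to reduce the $H$-type case to Theorem \ref{main} by means of a partial integration in the center adapted to each unit direction $\eta \in \z$. Define
$$R_\eta f({\bf v}, t) := \int_{\eta^\perp} f({\bf v}, t\eta + \xi)\, d\xi, \qquad ({\bf v}, t) \in \v \times \R,$$
which, via the identification $G_\eta \cong \HH^n$ recalled earlier, is naturally a function on the Heisenberg group. The first step is to verify that $R_\eta$ converts convolution on $G$ into convolution on $G_\eta$: writing $[{\bf v}, {\bf w}] = \langle [{\bf v}, {\bf w}], \eta \rangle \eta + [{\bf v}, {\bf w}]_{\eta^\perp}$ and translating the inner integration variable by $[{\bf v}, {\bf w}]_{\eta^\perp}$ yields the identity $R_\eta(f \ast g) = (R_\eta f) \ast_{G_\eta} (R_\eta g)$.

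The second step is to check that $R_\eta$ sends the heat kernel $h_s$ on $G$ to a fixed constant multiple of the Heisenberg heat kernel $q_s$. The explicit integral for $h_s$ given in this section exhibits it as the inverse $k$-dimensional Fourier transform in the central variable of a function that is radial in the central frequency; by the Fourier slice identity, the integral of $h_s$ over $\eta^\perp$ equals the one-dimensional inverse Fourier transform of that same radial profile restricted to the line $\R\eta$, and this is precisely the expression for $q_s$ displayed in Section 2. Combined with the convolution identity above, this shows that $R_\eta$ intertwines the heat semigroups and hence, after the same $\epsilon$-regularisation used in Section 3 to handle the analytic continuation in $s$, also intertwines the Schr\"odinger flows: if $u$ solves $i\partial_s u = \cal L u$ on $G \times \R$ with initial datum $f$, then $U := R_\eta u$ solves the corresponding Heisenberg Schr\"odinger equation on $\HH^n \times \R$ with initial datum $F := R_\eta f$.

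Since $h_s \geq 0$, the pointwise hypotheses $|f| \leq C h_a$ and $|u(\cdot, \cdot; s_0)| \leq C h_b$ transfer to $|F| \leq C' q_a$ and $|U(\cdot, \cdot; s_0)| \leq C' q_b$ on $\HH^n$ with the same constants $a, b, s_0$. Theorem \ref{main} applied to $U$, under the same hypothesis $ab < s_0^2$, gives $U \equiv 0$ and in particular $R_\eta f = 0$ for every unit $\eta \in \z$. The one-variable Fourier transform of $R_\eta f$ in $t$ equals $\hat f({\bf v}, \lambda \eta)$, where $\hat f$ denotes the $k$-dimensional partial Fourier transform of $f$ in the central variable; its vanishing along every ray $\R\eta$ forces $\hat f \equiv 0$, hence $f \equiv 0$ on $G$ and $u \equiv 0$ on $G \times \R$. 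The main obstacle will be a clean derivation of the kernel identity $R_\eta h_s = c\, q_s$ and its extension to $h_\zeta$ with $\zeta = \epsilon + is$ via analytic continuation; once these structural identities are recorded, the rest is a formal reduction to the Heisenberg case.
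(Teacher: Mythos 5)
Your proposal is correct and follows essentially the same route as the paper: reduction to Theorem \ref{main} via the partial Radon transform $\cal R_\eta$, using the kernel identity $\cal R_\eta h_s = q_s$ (which the paper quotes from \cite{Ra}, valid also for complex $s$ with positive real part) and the fact that $\cal R_\eta$ intertwines the two sub-Laplacians (quoted from \cite{Ri}), followed by injectivity of the Radon transform, which you justify by the standard Fourier slice argument. The only difference is cosmetic: you sketch derivations of these structural identities, while the paper simply cites them.
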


For a suitable function $f$ on $G$ we define its partial Radon transform  $\cal R_\eta f({\bf v}, {t})$  on $G_\eta$ by 
$$ \cal R_\eta f({\bf v}, {  t}) =\int_{\eta^\perp}   f({\bf v}, { t}\eta +\nu)d\nu$$ where $d\nu$ is the Lebesgue measure on $\eta^\perp.$  Since $G_\eta$ can be identified with the Heisenberg group $ \HH^n,$ we can think of $\cal R_\eta f$ as a function on $\HH^n.$ With this identification it has been proved in \cite{Ra}  that $\cal R_\eta h_s ({\bf v}, {  t}) = q_s({\bf v}, {  t}),$   for $s>0,$ where $q_s ({\bf v}, {  t})$  is the heat kernel from section 2. The above identity between the heat kernels holds true even when $s$ is complex with ${\rm Rel}(s)>0.$ 

In view of the assumptions on $ f({\bf v}, {\bf t})$ and $ 
u({\bf v}, {\bf t}; s_0)$ it follows that $\cal R_\eta f  ({\bf v}, {  t})$ 
and $\cal R_\eta u ({\bf v}, {  t}; s_0 )$ satisfy 
\begin{align*}
&| \cal R_\eta f  ({\bf v}, {  t})| \leq C q_a    ({\bf v}, {  t}),\\
&| \cal R_\eta u ({\bf v}, {  t}; s_0 ) | \leq C q_b ({\bf v}, { t}).
\end{align*}
Moreover, using the fact that under the Radon transform $\cal R_\eta,$ the  sub-Laplacian $\cal L$ on $G$ goes into the sub-Laplacian $\cal L$ on $\HH^n $ (see \cite{Ri}), it follows that $\cal R_\eta u$ solves the Schr{\rm \"o}dinger equation  on $\HH^n\times \R$ with initial data $\cal R_\eta f  ({\bf v}, {  t}).$ Hence we can appeal to Theorem \ref{main} to conclude that $\cal R_\eta u({\bf v}, {  t}; s)=0$ for all $s\in \R$ and for all $\eta \in \z $ whenever 
$ab<s_0^2.$ Now the injectivity of the Radon transform implies that if  
$ab<s_0^2,$ then $u({\bf v}, {\bf t}; s)=0$ for all $({\bf v}, {\bf t})\in G$  and $s\in \R.$ This establishes Theorem \ref{Htype}. 

\section{Some concluding remarks}

It would be interesting to see if Theorem 3.1 is sharp. Though we believe it 
is sharp we are not able to prove it. The main reason for the difficulty lies 
in the fact that  the heat kernel $ q_a({\bf z}, {t}) $ does not have Gaussian 
decay in the central variable. For the same reason   the equality 
case of Hardy's theorem for the group Fourier transform on the Heisenberg 
group is still an open problem. However, if we assume conditions on $ f^\lambda $ and $ u^\lambda $ 
instead of on $ f $ and $ u $ we can prove the following result.

\begin{thm}\label{maineq} Let $u({\bf z},t; s)$ be the solution to the 
Schr{\"o}dinger equation  for the sub-Laplacian $\cal L$ with initial 
condition $f.$ Fix $ \lambda \neq 0 $ and suppose that
$$ \vert f^\lambda({\bf z})  \vert \leq C q_a^\lambda({\bf z}),
\qquad 
 \vert u^\lambda({\bf z}; s_0)|\leq C q_b^\lambda({\bf z})$$
for some $a,b>0 $ and for a fixed $s_{0} \in \R^*.$ Then  we have 
$ f^\lambda({\bf z}) = c_\lambda q_{a}^\lambda({\bf z})
 e^{-i{{ \lambda }\over 4}|{\bf z}|^2 \cotg ( \lambda s_{0})} $ 
 whenever  $ \tanh(a\lambda) \tanh(b\lambda) =  \sin^2 (\lambda s_{0}). $
\end{thm}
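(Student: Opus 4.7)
The plan is to mirror the proof of Theorem \ref{main}, but to replace the strict inequality in Hardy's theorem for the Hankel transform by its equality case, and then to add one extra observation that isolates the radial part. Since we fix $\lambda\neq 0$ and work directly with the fibre $f^\lambda$ (rather than with the full $f$), the kernel $q_{is_0}^\lambda$ is well defined as soon as $\sin(\lambda s_0)\neq 0$, so the regularisation device from Theorem \ref{main} is unnecessary: the computation in Theorem \ref{crucial} goes through verbatim with $f^\lambda$ in place of $f_\epsilon^\lambda$, giving for every $(p_0,q_0,j_0)$ an identity that expresses the $(p_0,q_0,j_0)$-spherical-harmonic coefficient of $u^\lambda(\cdot;s_0)$ as $c_\lambda r^{p_0+q_0}e^{i\lambda r^2\cotg(\lambda s_0)/4}\,\cal H_{n+p_0+q_0-1}(F_{p_0,q_0,j_0})(R)$, where $F_{p_0,q_0,j_0}(t):=e^{i\lambda t^2\cotg(\lambda s_0)/4}\,(f^\lambda)_{p_0,q_0,j_0}^{\sim}(t)$ and $R=\lambda r/(2\sin(\lambda s_0))$.

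I would next feed the two Gaussian hypotheses into this identity. Writing $A=\lambda\coth(a\lambda)/4$ and $B=\coth(b\lambda)\sin^2(\lambda s_0)/\lambda$, the estimate $|f^\lambda|\leq Cq_a^\lambda$ gives $|F_{p_0,q_0,j_0}(t)|\leq C t^{-(p_0+q_0)}e^{-At^2}$, while $|u^\lambda(\cdot;s_0)|\leq Cq_b^\lambda$ together with the change of variable $r=2R\sin(\lambda s_0)/\lambda$ in the Hankel-transform identity gives $|\cal H_{n+p_0+q_0-1}(F_{p_0,q_0,j_0})(R)|\leq c_\lambda R^{-(p_0+q_0)}e^{-BR^2}$. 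A direct computation shows $4AB=\coth(a\lambda)\coth(b\lambda)\sin^2(\lambda s_0)$, so the hypothesis $\tanh(a\lambda)\tanh(b\lambda)=\sin^2(\lambda s_0)$ is exactly $AB=1/4$. The factors $t^{-(p_0+q_0)}$ and $R^{-(p_0+q_0)}$ only improve the Gaussian decay on $[0,\infty)$, so Theorem \ref{hardy} applies in its equality form and forces $F_{p_0,q_0,j_0}(t)=c_{p_0,q_0,j_0}\,e^{-At^2}$.

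The decisive and genuinely new step, which has no counterpart in the proof of Theorem \ref{main}, is to use the extra power $R^{-(p_0+q_0)}$ to kill all non-radial contributions. The classical formula $\cal H_\alpha(e^{-At^2})(R)=(2A)^{-(\alpha+1)}e^{-R^2/(4A)}$ together with the relation $1/(4A)=B$ (consequence of $AB=1/4$) implies $\cal H_{n+p_0+q_0-1}(F_{p_0,q_0,j_0})(R)=c'_{p_0,q_0,j_0}\,e^{-BR^2}$, and comparison with the upper bound above forces $|c'_{p_0,q_0,j_0}|\leq c_\lambda R^{-(p_0+q_0)}$ for every $R>0$, so $c'_{p_0,q_0,j_0}=0$ whenever $p_0+q_0\geq 1$. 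Only the $(0,0,1)$-component survives, and unravelling definitions yields $f^\lambda({\bf z})=C\,e^{-A|{\bf z}|^2}\,e^{-i\lambda|{\bf z}|^2\cotg(\lambda s_0)/4}$, which is $c_\lambda\,q_a^\lambda({\bf z})\,e^{-i\lambda|{\bf z}|^2\cotg(\lambda s_0)/4}$ after absorbing the normalisation constant of $q_a^\lambda$. The main obstacle is verifying that Theorem \ref{crucial} continues to hold without the $\epsilon$-regularisation when $\lambda$ is fixed; once this is settled the remainder is the algebraic matching $AB=1/4\Leftrightarrow\tanh(a\lambda)\tanh(b\lambda)=\sin^2(\lambda s_0)$ and the quick $R\to\infty$ argument just described.
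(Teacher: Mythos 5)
Your proposal is correct and takes essentially the same route as the paper: the identity of Theorem \ref{crucial} applied directly to $f^\lambda$ without the $\epsilon$-regularisation (legitimate here since, for fixed $\lambda\neq 0$, the hypothesis $\tanh(a\lambda)\tanh(b\lambda)=\sin^2(\lambda s_0)>0$ forces $\sin(\lambda s_0)\neq 0$, so $q_{is_0}^\lambda$ exists), the same Gaussian bounds with $A=\frac{\lambda}{4}\coth(a\lambda)$ and $B=\coth(b\lambda)\sin^2(\lambda s_0)/\lambda$, the same algebraic equivalence $AB=\frac14 \Leftrightarrow \tanh(a\lambda)\tanh(b\lambda)=\sin^2(\lambda s_0)$, and the equality case of Theorem \ref{hardy}. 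Your step for killing the non-radial components---computing $\mathcal H_\alpha(e^{-At^2})(R)=(2A)^{-(\alpha+1)}e^{-BR^2}$ and letting $R\to\infty$ against the bound $R^{-(p_0+q_0)}e^{-BR^2}$---is not genuinely new but the mirror image of the paper's argument, which instead compares the conclusion $f^\lambda_{p_0,q_0,j_0}(r)=c\,r^{p_0+q_0}e^{-\frac{\lambda}{4}\coth(a\lambda)r^2}e^{-i\frac{\lambda}{4}r^2\cotg(\lambda s_0)}$ with the hypothesis $|f^\lambda_{p_0,q_0,j_0}(r)|\le Ce^{-\frac{\lambda}{4}\coth(a\lambda)r^2}$ and lets $r\to\infty$ on the function side.
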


To prove this theorem, we can proceed as  in the proof of Theorem 3.1. We end 
up with the estimates
$$   \left | \cal H_{n+p_{0}+q_{0}-1}\left( e^{i{{ \lambda }\over 4}(\cdot)^2 
\cotg ( \lambda s_{0})}  (f^\lambda_{p_{0},q_{0},j_{0}})^{\sim} 
\right)  (r) \right | \leq c_\lambda   r^{-(p_{0}+q_{0})}     
e^{-{{\lambda}\over 4}\coth(\lambda b) \left( {{2\sin (\lambda s_{0})} \over {\lambda s_{0}} }\right)^2  {r^2}} $$
and 
$$ \left |(f^\lambda_{p_{0},q_{0},j_{0}})^{\sim}(r) \right | \leq 
c_\lambda e^{-{{\lambda}\over 4}\coth(\lambda a)r^2}.$$
We can now appeal to the equality case of Hardy's theorem for the Hankel 
transform (Theorem \ref{hardy}) to conclude that
$$ f^\lambda_{p_{0},q_{0},j_{0}}(r) = c_\lambda(p_0,q_0,j_0) r^{p_0+q_0}
e^{-{{\lambda}\over 4}\coth(\lambda a)r^2} 
e^{-i{{ \lambda }\over 4}r^2 \cotg ( \lambda s_{0})}. $$ 
But this is not compatible with the hypothesis on $ f^\lambda $ unless 
$ c_\lambda(p_0,q_0,j_0) = 0 $ for all $ (p_0,q_0) \neq (0,0).$ Hence 
$ f^\lambda $ is radial and equals 
$ c_\lambda q_a^\lambda({\bf z}) 
e^{-i{{ \lambda }\over 4}|{\bf z}|^2 \cotg ( \lambda s_{0})}. $ This 
proves Theorem \ref{maineq}.

The above result can be viewed as a uniqueness theorem for solutions of the 
Schr\"odinger equation associated to the twisted Laplacian $ L_\lambda $ 
defined by $ \cal L (e^{i\lambda t}f({\bf z})) = e^{i\lambda t}L_\lambda 
f({\bf z}).$ Indeed, $ q_a^\lambda({\bf z}) $ is the heat kernel associated to 
this operator. We refer to \cite[(2.3.7)]{Th} for the explicit expression of $L_\lambda.$ 
We can also consider the result as an analogue of Hardy's 
theorem for fractional powers of the symplectic Fourier transform. In fact, 
the unitary operator $ e^{ins}e^{-isL_1} $ with $ s = \frac{\pi}{2} $ is just 
the symplectic Fourier transform. Thus the above theorem for $ s_0 =  
\frac{\pi}{2} $ follows immediately from Hardy's theorem for the Fourier 
transform whereas for other values of $ s_0 $ we require a longwinding proof.

For the sake of completeness we state another result which can be considered 
as a theorem for fractional Fourier transform as well as a theorem for 
solutions of the Schr\"odinger equation associated to the Hermite operator 
$ H = -\Delta +|x|^2$ on $\R^n.$  This elliptic operator generates the Hermite semigroup 
whose kernel is known explicitly. We also know that $ e^{\frac{i}{4}n\pi} e^{
-\frac{i}{4}\pi H} $ is the Fourier transform on $ \R^n.$

\begin{thm} Let $ u(x,s) = e^{-isH}f(x) $ be the solution to the Schr\"odinger 
equation $$ i\partial_s u(x,s) - Hu(x,s)=0, $$ with initial condition $ f.$ Suppose
$$ |f(x)| =O(e^{-a|x|^2}),\qquad  |u(x,s_0)| = O(e^{-b|x|^2}) $$ for some 
$ a, b > 0.$ Then $ u = 0 $ on $\R^n\times \R$ whenever $ a b \sin^2(2s_0) > \frac{1}{4}.$
\end{thm}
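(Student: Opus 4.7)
The plan is to recognize the Hermite Schr\"odinger propagator $e^{-is_0 H}$, when applied to a function with sufficient Gaussian decay, as a unimodular quadratic phase times a constant times the Euclidean Fourier transform of a twisted version of $f$, and then invoke the classical Hardy theorem on $\R^n$ directly.

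First I would recall the Mehler formula: for $\Re(t)>0$,
$$
e^{-tH}f(x)=\int_{\R^n}K_t(x,y)\,f(y)\,dy,\qquad
K_t(x,y)=(2\pi\,\sh(2t))^{-n/2}\exp\!\Big(-\tfrac{1}{2}\coth(2t)\,(|x|^2+|y|^2)+\tfrac{x\cdot y}{\sh(2t)}\Big).
$$
As in Section 3, I would regularise by replacing $f$ with $f_\epsilon:=e^{-\epsilon H}f$ for small $\epsilon>0$, so that $u_\epsilon(x,s):=e^{-isH}f_\epsilon(x)$ is represented by integration against the analytic extension $K_{\epsilon+is}$. The Gaussian bounds on $f$ and $u(\cdot,s_0)$ survive the replacement (with $a,b$ replaced by $a+\epsilon$ and $b+\epsilon$ respectively), so it suffices to prove $f_\epsilon\equiv 0$. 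Taking $t=\epsilon+is_0$ and formally letting $\epsilon\to 0^+$, and using $\coth(2is_0)=-i\cotg(2s_0)$, $\sh(2is_0)=i\sin(2s_0)$, the kernel becomes
$$
K_{is_0}(x,y)=(2\pi i\sin(2s_0))^{-n/2}\exp\!\Big(\tfrac{i}{2}\cotg(2s_0)\,(|x|^2+|y|^2)-\tfrac{i\,x\cdot y}{\sin(2s_0)}\Big).
$$

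The crucial observation is that, upon setting $g(y):=e^{(i/2)\cotg(2s_0)\,|y|^2}f(y)$, the propagator factors as
$$
u(x,s_0)=(2\pi i\sin(2s_0))^{-n/2}\,e^{(i/2)\cotg(2s_0)\,|x|^2}\,\widehat g\!\left(\frac{x}{\sin(2s_0)}\right),
$$
where $\widehat g(\xi):=\int_{\R^n}g(y)e^{-i\xi\cdot y}\,dy$ is the Euclidean Fourier transform of $g$. Because the twist factor is unimodular, $|g(y)|=|f(y)|\le Ce^{-a|y|^2}$. In the other direction, the hypothesis $|u(x,s_0)|\le Ce^{-b|x|^2}$ translates via the change of variables $\xi=x/\sin(2s_0)$ into
$$
|\widehat g(\xi)|\le C'\exp\!\big(-b\sin^2(2s_0)\,|\xi|^2\big).
$$

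Finally I would invoke Hardy's uncertainty principle on $\R^n$: if $|g(y)|\le Ce^{-\alpha|y|^2}$ and $|\widehat g(\xi)|\le Ce^{-\beta|\xi|^2}$ with $4\alpha\beta>1$, then $g\equiv 0$. Applying this with $\alpha=a$ and $\beta=b\sin^2(2s_0)$, the assumption $ab\sin^2(2s_0)>\tfrac14$ is precisely $4\alpha\beta>1$. Hence $g\equiv 0$, so $f\equiv 0$, and the unitarity of $e^{-isH}$ on $L^2(\R^n)$ yields $u\equiv 0$ on $\R^n\times\R$. The only real technical obstacle is the non-integrability of the oscillatory kernel $K_{is_0}$; this is handled exactly as in the proof of Theorem \ref{main}, by running the above Hardy argument at the level of $f_\epsilon$ for each $\epsilon>0$ (choosing $\epsilon$ so small that $(a+\epsilon)(b+\epsilon)\sin^2(2s_0)>\tfrac14$ still holds, which is possible since the hypothesis is strict) and then letting $\epsilon\to 0^+$.
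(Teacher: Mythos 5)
Your proof is correct and follows essentially the same route as the paper, which merely sketches it: use Mehler's formula at $t=is_0$ to write $e^{-is_0H}f$ as a unimodular quadratic chirp times the Fourier transform of $e^{(i/2)\cotg(2s_0)|y|^2}f(y)$ evaluated at $x/\sin(2s_0)$, then apply the classical Hardy theorem on $\R^n$ with exponents $a$ and $b\sin^2(2s_0)$. One small slip in your regularisation step: smoothing by $e^{-\epsilon H}$ \emph{worsens} Gaussian decay rather than improving it, so the bounds on $f_\epsilon$ and $u_\epsilon(\cdot,s_0)$ hold with constants $a_\epsilon<a$ and $b_\epsilon<b$ (a direct Gaussian computation with the Mehler kernel gives $a_\epsilon=\frac{1+2a\coth(2\epsilon)}{4a+2\coth(2\epsilon)}$), not with $a+\epsilon$ and $b+\epsilon$; since $a_\epsilon\to a$ and $b_\epsilon\to b$ as $\epsilon\to 0^+$ and the hypothesis $ab\sin^2(2s_0)>\frac{1}{4}$ is strict, choosing $\epsilon$ small enough repairs this without affecting the conclusion.
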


The theorem follows from Hardy's theorem for $ \R^n $ once we 
realise $ u $ as the Fourier transform of a function. But this is easy 
to check in view of the Mehler's formula (see \cite{Th}) for the Hermite 
functions. In view of this formula, the kernel of $ e^{-itH} $ is given by
$$ K_r(x,y) = \pi^{-n/2}(1-r^2)^{-n/2}e^{-\frac{1}{2}\frac{1+r^2}{1-r^2}(|x|^2+
|y|^2) + \frac{2r}{1-r^2}x\cdot y} $$ 
where $ r = e^{-2it}.$ Using this formula the theorem can be easily proved.

\begin{center}
{\bf Acknowledgments}
\end{center}

This work was done while the second author was visiting Universit\'e Henri 
Poincar\'e, Nancy. He wishes to thank Salem Ben Said for the invitation and 
the warm hospitality during his visit.

\end{document}